\theoremstyle{definition}
\newtheorem{theorem}{Theorem}[section]
\newtheorem{definition}{Definition}[section]
\newtheorem{example}{Example}[section]
\newtheorem{proposition}{Proposition}[section]
\newtheorem{remark}{Remark}[section]
\newtheorem{lemma}[theorem]{Lemma}
\newcommand{\blabold}[1]{\ensuremath{\mathbb #1}}
\newcommand{\field}[1]{\ensuremath{\mathbb #1}}
\newcommand{\set}[1]{\ensuremath{\mathcal #1}}
\newcommand{\vect}[1]{\ensuremath{\mathbf #1}}
			\newcommand{\vectorlist}[3]{\ensuremath{\mathbf #1_{#2},  \ldots, \mathbf #1_{#3}}} 
			\newcommand{\scalarlist}[2]{\ensuremath{#1_1, \ldots, #1_{#2}}}
\begin{document}


\title{Schur's Lemma for Coupled Reducibility and Coupled Normality}

\author{
{\footnote{ {\bf Funding}: The work of D. Lahat and C. Jutten was supported by the project CHESS, 2012-
ERC-AdG-320684. GIPSA-lab is a partner of the LabEx PERSYVAL-Lab (ANRÐ11-LABX-0025).}}
Dana Lahat\thanks{  IRIT, Universit\'{e} de Toulouse, CNRS, Toulouse, France (Dana.Lahat@irit.fr).          
This work was carried out when D. Lahat was with GIPSA-lab, Grenoble, France.}      
 \and Christian Jutten\thanks{Univ. Grenoble Alpes, CNRS, Grenoble INP, GIPSA-lab, 38000 Grenoble, France. C. Jutten is
a senior member of Institut Univ. de France (christian.jutten@gipsa-lab.grenoble-inp.fr).} 
\and Helene Shapiro\thanks{Department of Mathematics and Statistics, Swarthmore College, Swarthmore, PA 19081 (hshapir1@swarthmore.edu).}}

\maketitle

\begin{abstract}
Let $\set A = \{A_{ij} \}_{i, j \in \set I}$, where \set I is an index set,   be a doubly indexed family of matrices, where 
$A_{ij}$ is $n_i \times n_j$.  For each $i \in \set I$, let $\set V_i$ be an $n_i$-dimensional vector space.  
We say \set A is {\em reducible in the coupled sense} if there exist subspaces, $\set U_i \subseteq \set V_i$, with 
$\set U_i \neq \{0\}$ for at least one $i \in \set I$, and  $\set U_i \neq \set V_i$ for at least one $i$, such that 
$A_{ij} (\set U_j) \subseteq \set U_i$ for all~$i, j$.
Let $\set B = \{B_{ij} \}_{i, j \in \set I}$ also be a doubly indexed family of matrices, where $B_{ij}$ is $m_i \times m_j$.
For each $i \in \set I$, let $X_i$ be a matrix of size $n_i \times m_i$.  Suppose 
$A_{ij} X_j = X_i B_{ij}$ for all~$i, j$.  We prove versions of Schur's Lemma for  \set A, \set B satisfying 
coupled irreducibility conditions. 
We also consider a refinement of Schur's Lemma for sets of normal matrices and prove corresponding versions for \set A, \set B satisfying  coupled normality 
and coupled irreducibility conditions.  
\end{abstract}

\section{Introduction}

Let $K$ be a positive integer and let \scalarlist {n}{K} and \scalarlist {m}{K} be positive integers.
Consider two doubly indexed families of matrices,  $\set A = \{A_{ij} \}_{i, j = 1}^K$ and $\set B = \{B_{ij} \}_{i, j = 1}^K$, where $A_{ij}$ is $n_i \times n_j$
and  $B_{ij}$ is $m_i \times m_j$.  Put $N = \sum_{i=1}^K n_i$ and $M = \sum_{i=1}^K m_i$.  
 Arrange the $A_{ij}$'s into an $N \times N$ matrix, $A$, with $A_{ij}$ in block $i,j$ of $A$. 
\[ A = \pmatrix{A_{11} & A_{12} & \cdots & A_{1K} \cr
A_{21} & A_{22} & \cdots & A_{2K} \cr
\vdots & \vdots & & \vdots \cr
A_{K1} & A_{K2} & \cdots & A_{KK} \cr }. \]
Similarly, form an $M \times M$ matrix $B$ with $B_{ij}$ in block $i, j$.  Let $X_i$  be an $n_i \times m_i$ matrix and form an 
$N \times M$ matrix X with blocks \scalarlist{X}{K} down the diagonal and zero blocks elsewhere.  Thus, 
\[ X = \pmatrix{X_{1} & 0 & 0  & \cdots & 0 \cr
0  & X_{2} & 0 & \cdots & 0 \cr
0 & 0 & X_3  & \cdots & 0 \cr 
\vdots & \vdots  & \vdots  &  & \vdots \cr
0 & 0 & 0 & \cdots & X_{K} \cr }, \]
where the  $0$ in position $i,j$ represents an  $n_i \times m_i$ block of zeroes.   Then $AX = XB$ if and only if 
  \begin{equation}
  \label{startingbasiccoupling}
  A_{ij} X_j = X_i B_{ij},
  \end{equation}
for all $i, j = 1, \ldots, K$. 
We may rewrite $AX = XB$ as $AX - XB = 0$, a homogeneous Sylvester equation~\cite{Syl84}.  

We define several versions of coupled reducibility and prove corresponding versions of Schur's Lemma~\cite{Schur} for pairs~\set A,~\set B.
Imposing coupled irreducibility constraints on \set A and \set B restricts the possible solutions, \scalarlist {X}{K}, to the equations~(\ref{startingbasiccoupling}). We also discuss a refinement  of Schur's Lemma for normal matrices, and 
prove corresponding versions for \set A, \set B satisfying coupled normality conditions. 

The system of coupled matrix equations~(\ref{startingbasiccoupling}) arises in a recent model for multiset data analysis~\cite{LJ15, LJ16}, called 
{\em joint independent subspace analysis}, or JISA. This model consists of $K$ datasets, where each dataset is an unknown mixture of several latent stochastic multivariate signals. The blocks of $A$ and~$B$ represent statistical links among these datasets. More specifically,~$A_{ij}$ and $B_{ij}$ represent statistical correlations among latent signals in the $i$th and $j$th datasets. The multiset joint analysis framework, as opposed to the analysis of K distinct unrelated datasets, arises when a sufficient number of cross-correlations among datasets, i.e., $A_{ij}$ and
 $B_{ij}, i \neq j$, are not zero. It turns out~\cite{LaJu15,  LJ18,  LaJu18} that the identifiability and uniqueness of this model, in its simplest form, boil down to characterizing the set of solutions to the system of matrix equations~(\ref{startingbasiccoupling}), when the cross-correlations among the latent signals are subject to coupled (ir)reducibility. In other words, the coupled reducibility conditions that we introduce in this paper can be attributed with a physical meaning, and can be applied to real-world problems. We refer the reader 
 to~\cite{LaJu15, LaJu18} for further details on the JISA model, and on the derivation of~(\ref{startingbasiccoupling}). In this paper, we consider scenarios more general than those required to address the signal processing problem in~\cite{LaJu15,  LaJu18}. The analysis in Section 6, which focuses on coupled normal matrices, is inspired by the JISA model 
 in~\cite{LaJu15, LaJu18}, in which the matrices $A$ and $B$ are Hermitian, and thus a special case of coupled normal.
A limited version of some of the results in this manuscript was presented orally in, e.g.,~\cite{LahJut15, Lahat16, ALJ16} and in a technical report~\cite{LaJu15}; however, they were never published.

While motivated by the matrix equation, $AX = XB$,  the main definitions, theorems, and proofs do not require 
$K$ to be finite.  Thus, for a general index set, \set I, we consider doubly indexed families, $\set A = \{A_{ij} \}_{i, j \in \set I}$  and 
$\set B = \{B_{ij} \}_{i, j \in \set I}$.  
For the situation described above, $\set I = \{1, 2, \ldots, K\}$.

We use  \field F to denote the field of scalars.  For some results, \field F can be any field.  
For results involving unitary and normal matrices, 
$\field F = \blabold C$, the field of complex numbers. 
For each $i \in \set I$, let $n_i$ and $m_i$ be positive integers, let $\set V_i$ be an
$n_i$-dimensional vector space over \field F, and let $\set W_i$ be an $m_i$-dimensional vector space over \field F.  
(Essentially, $\set V_i = \field F^{n_i}$ and $\set W_i = \field F^{m_i}$.)   For all $i, j \in \set I$, let $A_{ij}$ be an $n_i \times n_j$ matrix 
and let $B_{ij}$ be $m_i \times m_j$.  View $A_{ij}$ as a linear transformation from $\set V_j$ to $\set V_i$, and 
$B_{ij}$ as a linear transformation from $\set W_j$ to $\set W_i$.  For each $i \in \set I$, let $X_i$ be an $n_i \times m_i$ matrix; 
view $X_i$ as a linear transformation from $\set W_i$ to $\set V_i$.  We are interested in families \set A, \set B satisfying 
the equations~(\ref{startingbasiccoupling}) for all $i, j \in \set I$.  

For some results, all of the $n_i$'s will be equal, and all of the $m_i$'s will be equal.  In this case, we use $n$ for the 
common value of the $n_i$'s and $m$ for the common value of the $m_i$'s.   We then set $\set V = \field F^n$ and 
$\set W = \field F^m$.   Each $X_i$ is then $n \times m$.  For $\set I = \{1, \ldots, K\}$, we have $N = Kn$, and $M = Km$.  
All of the blocks $A_{ij}$ in $A$ are $n \times n$, while all of the blocks $B_{ij}$ in $B$ are $m \times m$.

 Section~\ref{sectionreduSchur} reviews the usual matrix version of Schur's Lemma and its proof.  Section~\ref{coupledreduc} defines coupled reducibility and 
 two restricted versions, called {\it proper} and {\it strong} reducibility. 
 Section~\ref{coupledSchurLemma} states and proves versions of Schur's Lemma for coupled reducibility and proper reducibility, Theorem~\ref{maintheorem}.  
 In section~\ref{sectionstrongreduc}, we define some graphs associated with the pair \set A, \set B and use them for versions of
 Schur's Lemma corresponding to strongly coupled reducibility, Theorems~\ref{Schurlemmaversionthree} and~\ref{Schurlemmaversionfour}.  
 Section~\ref{normalcoupled} deals  with a refinement of Schur's Lemma for sets of normal matrices and 
 corresponding versions for pairs \set A, \set B which are coupled normal, Theorem~\ref{couplednormalirred}.  The Appendix presents examples to support some claims made
 in Section~\ref{coupledreduc}.


\section{Reducibility and Schur's Lemma}
\label{sectionreduSchur}
We begin by reviewing the ordinary notion of reducibility for a set of linear transformations and Schur's Lemma.

\begin{definition} 
\label{transformationreducible}
A set, \set T,  of linear transformations,  on an $n$-dimensional vector space, \set V, is {\it reducible} if there is a proper, non-zero 
subspace \set U of~\set V such that $T(\set U) \subseteq \set U$ for all $T \in \set T$.  If \set T is not reducible, we say it is {\it irreducible}.  
\end{definition}

The subspace \set U is an invariant subspace for each transformation $T$ in~\set T.     We say \set T is {\it fully reducible} if it is possible to decompose \set V as a direct sum
 $\set V = \set U \oplus \hat{\set U}$, where \set U and $\hat{\set U}$ are both nonzero, proper invariant subspaces of~\set T.

Alternatively, one can state this in matrix terms.  The linear transformations in \set T may be represented as $n \times n$ matrices, relative to a choice of basis for 
\set V.   Let~$d$ be the dimension of \set U; choose a basis for \set V in which the first~$d$ basis vectors are a basis for~\set U.  Since  \set U is an invariant subspace of each~$T$ in~\set T, the matrices representing \set T relative to this basis are block upper triangular with square diagonal blocks of sizes $d \times d$ and 
$(n-d) \times (n-d)$.  The first diagonal block, of size $d \times d$, represents the action of the transformation on the invariant subspace~\set U.  
The $(n-d) \times d$ block in the lower left hand corner consists of zeroes.  Since changing basis is equivalent to applying a matrix similarity, we have the following matrix version of Definition~\ref{transformationreducible}.

\begin{definition} (Matrix version of reducibility.)
\label{matrixreducible}
A set  \set M  of $n \times n$ matrices is {\it reducible} if, for some $d$, with $0 < d < n$, there is a nonsingular matrix~$S$ such that,
for each $A$ in \set M, the matrix $S^{-1} AS$ is block upper triangular with  square diagonal blocks of sizes $d \times d$ and $(n-d) \times (n-d)$. 
\end{definition}

When \set T is fully reducible, we can use a basis for \set V in which the first $d$ basis vectors are a basis for $\set U$, and the remaining $n-d$ basis vectors are 
a basis for~$\hat{\set U}$.  The corresponding matrices for \set T are then block diagonal, with diagonal blocks of sizes $d \times d$ and $(n-d) \times (n-d)$.  

If $\field F = \blabold C$ and \set M is reducible, the $S$ in Definition~\ref{matrixreducible} can be chosen to be a unitary matrix, by using an orthonormal basis 
for $\blabold C^n$ in which the first~$d$ basis vectors are an orthonormal basis for \set U.  If \set M is fully reducible, and the subspaces $\set U$ and $\hat{\set U}$ are 
orthogonal, use an orthonormal basis of $\set U$ for the first $d$ columns of $S$ and an orthonormal basis of $\hat{\set U}$ for the remaining $n -d$ columns. 
Then $S$ is unitary, and $S^{-1}M S$ is block diagonal, with diagonal blocks of sizes $d \times d$ and $(n-d) \times (n-d)$.

The following fact is well known; we include the proof because the idea is used later.  For this fact, we assume we are working over \blabold C, or at least over a field that contains the eigenvalues of the transformations.  

\begin{proposition}
 Let  \set M be an irreducible set of $n \times n$ complex matrices.  
 Suppose the $n \times n$ matrix $C$ commutes with every element of~\set M.  Then $C$ is a scalar matrix. 
 \end{proposition}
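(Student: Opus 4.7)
The plan is to exploit the fact that over $\mathbb{C}$ any matrix has an eigenvalue, and to build an invariant subspace from the corresponding eigenspace. The existence of an eigenvalue is precisely why the hypothesis $\field F = \blabold C$ (or at least algebraic closure) matters; without it one could only conclude $C$ lies in a division algebra over \field F, not that it is scalar.

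First I would fix an eigenvalue $\lambda \in \blabold C$ of $C$ and consider the eigenspace $\set U = \ker(C - \lambda I)$, which is a nonzero subspace of $\blabold C^n$. The key computation is to observe that for any $M \in \set M$ and any $u \in \set U$,
\[
(C - \lambda I) M u = M (C - \lambda I) u = 0,
\]
where the first equality uses that $M$ commutes with $C$ (and hence with $C - \lambda I$). This shows $Mu \in \set U$, so $\set U$ is an invariant subspace for every element of $\set M$.

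At this point I would invoke the irreducibility hypothesis on $\set M$. Since $\set U$ is invariant under all $M \in \set M$, and $\set U \neq \{0\}$, irreducibility forces $\set U = \blabold C^n$. But $\set U = \ker(C - \lambda I) = \blabold C^n$ means $C - \lambda I = 0$, i.e., $C = \lambda I$ is a scalar matrix, finishing the proof.

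The only real subtlety is making sure $\set U$ is a \emph{proper} subspace consideration does not trip us up: $\set U$ could be all of $\blabold C^n$ from the outset (which is exactly the desired conclusion), and $\set U$ is guaranteed nonzero by the existence of $\lambda$, so the dichotomy in Definition~\ref{matrixreducible} leaves only the scalar case. This is the step to double-check, but it is essentially immediate.
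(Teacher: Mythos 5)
Your proof is correct and is essentially the same argument as the paper's: both fix an eigenvalue $\lambda$ of $C$, show the eigenspace $\ker(C-\lambda I)$ is invariant under every element of $\set M$ via the commutation relation, and then invoke irreducibility together with the nonzeroness of the eigenspace to conclude it is all of $\blabold C^n$, forcing $C=\lambda I$. Your closing remark correctly resolves the only subtlety (the eigenspace being nonzero but possibly the whole space), exactly as the paper's proof implicitly does.
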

 
 \begin{proof}
 Let $\lambda$ be an eigenvalue of $C$, and let  $\set U_{\lambda}$ denote the corresponding eigenspace. 
 Let $A \in \set M$.  For any $\vect v \in \set U_{\lambda}$, we have $C(A \vect v) = A(C \vect v) = \lambda (A\vect v)$. 
 Hence  $A \vect v$ is in $\set U_{\lambda}$, and so $\set U_{\lambda}$ is invariant under each element of \set M.  Since 
 an eigenspace is nonzero, and \set M is irreducible, we must have $\set U_{\lambda} = \blabold C^n$.  Hence~$C = \lambda I_n$. 
 \end{proof}

 Schur's Lemma plays a key role in group representation theory.  It is used to establish uniqueness of 
 the decomposition of a representation of a finite group into a sum of irreducible representations.    
 However, one need not have a matrix group; the result holds for irreducible sets of matrices.
 We include the usual proof~\cite{Art11}, because the same idea is used to prove our versions for coupled reducibility.

\begin{theorem} [Schur's Lemma]
\label{SchurLemma} 
 Let $\{ A_i\}_{i \in \set I}$ be an irreducible family of $n \times n$ matrices, and let
$\{ B_i \}_{i \in \set I}$ be an irreducible family of $m \times m$ matrices.  Suppose $P$ is an $n \times m$ matrix such that 
$A_i P = P B_i$ for all $i \in \set I$.  Then, either $P = 0$, or $P$ is nonsingular; in the latter case we must have $m = n$.
 For matrices of complex numbers, if  $A_i = B_i$ for all $i \in \set I$, then $P$ is a scalar matrix.  
\end{theorem}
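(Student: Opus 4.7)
The plan is to mimic the argument in the preceding proposition: attach to $P$ two subspaces that are naturally invariant under $\set A$ and $\set B$ respectively, and then invoke irreducibility to force each of them into an extreme case. Specifically, I would introduce the null space $\set N = \{\vect w \in \set W : P \vect w = \vect 0\} \subseteq \set W$ and the range $\set R = P(\set W) \subseteq \set V$, where $\set V = \field F^n$ and $\set W = \field F^m$.

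First I would check invariance. For any $\vect w \in \set N$ and any index $i$, the intertwining relation gives $P(B_i \vect w) = A_i (P \vect w) = \vect 0$, so $B_i \vect w \in \set N$; hence $\set N$ is invariant under every $B_i$. Dually, for $\vect v = P \vect w \in \set R$, we have $A_i \vect v = A_i P \vect w = P(B_i \vect w) \in \set R$, so $\set R$ is invariant under every $A_i$. Because $\{B_i\}_{i \in \set I}$ is irreducible, $\set N$ must be either $\{\vect 0\}$ or all of $\set W$, and because $\{A_i\}_{i \in \set I}$ is irreducible, $\set R$ must be either $\{\vect 0\}$ or all of $\set V$. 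If $\set N = \set W$, or equivalently $\set R = \{\vect 0\}$, then $P = 0$. Otherwise $\set N = \{\vect 0\}$ and $\set R = \set V$, so $P$ is both injective (forcing $m \le n$) and surjective (forcing $n \le m$); hence $m = n$ and $P$ is nonsingular.

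For the second statement, assume $\field F = \blabold C$ and $A_i = B_i$ for all $i \in \set I$; in particular $m = n$, so eigenvalues of $P$ are defined. Since $\blabold C$ is algebraically closed, $P$ has at least one eigenvalue $\lambda$. The matrix $P - \lambda I_n$ then satisfies $A_i(P - \lambda I_n) = (P - \lambda I_n) A_i$ for every $i$, so the first part applies with this matrix in place of $P$: it is either $0$ or nonsingular. But $\lambda$ is an eigenvalue, so $P - \lambda I_n$ is singular, forcing $P - \lambda I_n = 0$, i.e., $P = \lambda I_n$.

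No part of the argument looks like a real obstacle; the content is entirely the invariance check plus the dichotomy from irreducibility. The only point that requires a small sanity check is making sure $n = m$ before speaking of eigenvalues of $P$ in the last paragraph, but this is supplied for free by the first part of the theorem.
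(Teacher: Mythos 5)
Your proof is correct. The first half is exactly the paper's argument: $\ker(P)$ is invariant under every $B_i$, $\mathrm{range}(P)$ is invariant under every $A_i$, and irreducibility forces each into an extreme case, giving the zero-or-nonsingular dichotomy. Where you diverge is the scalar-matrix conclusion. The paper does not argue via $P - \lambda I_n$; it appeals to its earlier Proposition 2.1 (a matrix commuting with an irreducible complex family is scalar), whose proof works directly with the eigenspace $\set U_\lambda$ of $P$: that eigenspace is invariant under every $A_i$, nonzero, hence all of $\blabold C^n$. Your finishing move instead bootstraps the first part of the theorem itself: $P - \lambda I_n$ satisfies the same intertwining relations, so it is zero or nonsingular, and singularity at an eigenvalue forces $P = \lambda I_n$. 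Both are standard and both hinge on algebraic closedness; the difference matters slightly for what follows in the paper, since the eigenspace-invariance mechanism is the one the authors reuse verbatim in the coupled setting (part 3 of Lemma 4.1 introduces the spaces $\set U_i(\alpha)$ and Theorem 4.2 runs the same eigenspace dichotomy), whereas your trick would also adapt (the family $\{X_i - \lambda I_{n_i}\}$ satisfies the coupled intertwining relations when $\set A = \set B$) but is not the route taken. One small slip in your closing remark: the equality $m = n$ needed to speak of eigenvalues of $P$ comes for free from the hypothesis $A_i = B_i$ (an $n \times n$ matrix equals an $m \times m$ matrix), not from the first part of the theorem --- as you in fact correctly noted earlier in the same paragraph; and note that quoting $m=n$ from the first part alone would be circular in the degenerate case $P = 0$, which your argument handles anyway since $P=0$ is scalar.
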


\begin{proof}
View the $A_i$'s as linear transformations on an $n$-dimensional vector space \set V, and 
the $B_i$'s as linear transformations on an $m$-dimensional vector space \set W.  
The $n \times m$ matrix  $P$ represents a linear transformation from~\set W to~\set V.  So $ker(P)$ is a subspace  of \set W and 
$range(P)$ is a subspace of \set V.  

Let $\vect w \in ker(P)$.  Then $P (B_i \vect w) = A_i P \vect w = 0$.  Hence, $ker(P)$ is invariant under $\{ B_i\}_{i \in \set I}$.
Since $\{ B_i \}_{i \in \set I}$ is irreducible,  $ker(P)$ is either  $\{0\}$ or  \set W.  In the latter case, $P = 0$.  If $P \neq 0$, then 
$ker(P) = \{0\}$.  
Now consider the range space of $P$.  For any $\vect w \in \set W$, we have $A_i( P \vect w)=P(B_i \vect w)~\in range(P)$,  
so the range space of~$P$ is invariant under $A_i$ for each~$i$.  Since $\{ A_i\}_{i \in \set I}$ is irreducible,~$range(P)$ is either 
$\{0\}$ or \set V.  But we are assuming $P \neq 0$ so~$range(P) = \set V$.  Since we also have $ker(P) = \{0\}$, the matrix $P$ must be 
nonsingular and~$m = n$.   

If $A_i = B_i$ for 
all $i \in \set I$, then $P$ commutes with each $A_i$.  If each $A_i$ is  a complex matrix, then, since $\{ A_i\}_{i \in \set I}$ is irreducible, $P$ must be a scalar matrix.  
\end{proof}

\noindent
For nonsingular $P$, we have $P^{-1} A_i P = B_i$ for all $i \in \set I$, so  $\{ A_i \}_{i \in \set I}$ and~$\{ B_i \}_{i \in \set I}$ are simultaneously similar.


\section{Coupled Reducibility}
\label{coupledreduc}

For simultaneous similarity of  $\{ A_i \}_{i \in \set I}$ and $\{ B_i \}_{i \in \set I}$, there is a
nonsingular matrix,~$P$, such that $P^{-1} A_i P = B_i$ for all~$i$.  
We now define a ``coupled" version of similarity for two doubly indexed families,  \set A and  \set B, with $n_i = m_i$ for all $i \in \set I$. 
In this case, $A_{ij}$ and $B_{ij}$ are matrices of the same size, and in the equations~(\ref{startingbasiccoupling}), each $X_i$ is a square matrix.    
 
 \begin{definition}
 Let  $\set A = \{A_{ij} \}_{i, j \in I}$ and $\set B = \{B_{ij} \}_{i, j \in I}$, where $n_i = m_i$ for all $i \in \set I$.  We say \set A and \set B are 
 {\it similar in the coupled sense} if there exist nonsingular matrices $\{ T_i \}_{i \in I}$, where $T_i$ is $n_i \times n_i$, such that 
$T_i^{-1} A_{ij} T_j = B_{ij}$
 for all $i, j \in \set I$.  
 \end{definition}
 For a finite index set $\set I = \{1, \ldots, K\}$, this can be stated in terms of the matrices, $A$ and  $B$.  
 Let $T$ be the block diagonal matrix $ T_1 \oplus T_2 \oplus \cdots \oplus T_K$.  
  Then $AT = TB$ if and only if $A_{ij}T_j = T_i B_{ij}$ for all $i, j$.  Hence,~\set A and \set B are similar in the coupled sense 
  if and only if $T$ is nonsingular and~$T^{-1} A T = B$.   
 
 We define several versions of ``reducible in the coupled sense"  for a doubly indexed family \set A.  The basic idea is that there are subspaces, 
 $\{ \set U_i \}_{i \in \set I}$, where $\set U_i \subseteq \set V_i$, such that $A_{ij}(\set U_j) \subseteq \set U_i$ for all $i, j \in \set I$.  
 This holds trivially when $\set U_i$ is zero for all $i$, and when
 $\set U_i = \set V_i$  for all $i$, so we shall insist that at least one subspace is nonzero, and at least one is not $\set V_i$. 
 We are also interested in two more restrictive versions:  the case where at least one $\set U_i$ is a nonzero, proper subspace, and the 
 case where every $\set U_i$ is a nonzero, proper subspace of $\set V_i$. 
    
 \begin{definition}
 \label{generalcoupledreduc}
 Let  $\set A = \{A_{ij} \}_{i, j \in I}$ where $A_{ij}$ is $n_i \times n_j$.  We say  \set A is {\em reducible in the coupled sense} if there 
 exist subspaces $\{ \set U_i \}_{i \in \set I}$, where $\set U_i \subseteq \set V_i$,
 such that the following hold.   
 \begin{enumerate}
 \item For at least one $i$, we have  $\set U_i \neq \{0\}$.
 
 \item For at least one $i$, we have $\set U_i \neq \set V_i$.  
 
 \item For all $i, j \in \set I$ we have
 $ A_{ij} (\set U_j) \subseteq \set U_i.$
 \end{enumerate}
 We say $\{ \set U_i \}_{i \in \set I}$ is a reducing set of subspaces for \set A, or that \set A is reduced by $\{ \set U_i \}_{i \in \set I}$.
  If \set A is not reducible in the coupled sense, we say it is {\em irreducible} in the coupled sense.  
 We say \set A is {\em properly reducible in the coupled sense} if at least one $\set U_i$ is a nonzero, proper subspace of $\set V_i$.  
 We say \set A is {\em strongly reducible in the coupled sense} if every $\set U_i$   is a nonzero, proper subspace of $\set V_i$. 
\end{definition}

\begin{remark} If $n_i = 1$ for all $i$, the one-dimensional spaces $\set V_i$ have no nonzero proper invariant subspaces, so \set A cannot be 
properly or strongly irreducible.  If $K = 1$, then \set A consists of a single $n \times n$ matrix.  
\end{remark} 

\begin{remark} If $n_i = n$ for all $i$, and the subspaces $\{ \set U_i \}_{i \in \set I}$ are all the same nonzero proper subspace, i.e, for all $i$, we have 
$\set U_i = \set U$ where \set U is a nonzero, proper subspace of \set V, then $\set A $ is reducible in the 
ordinary sense, given in Definition~\ref{transformationreducible}.  
\end{remark} 
Note the following facts. 
\begin{enumerate}
\item If $\set U_j = \{0\}$,  then $ A_{ij} (\set U_j) \subseteq \set U_i$ holds for any $A_{ij}$ and any $\set U_i$.  
\item If $\set U_i = \set V_i$, then $ A_{ij} (\set U_j) \subseteq \set U_i$ holds for any $A_{ij}$ and any $\set U_j$. 
\item If $\set U_j = \set V_j$ and $\set U_i = \{0\}$, then $A_{ij} = 0$.
\item For $i = j$, we have $A_{ii}(\set U_i) \subseteq \set U_i$, so $\set U_i$ is an invariant subspace of $A_{ii}$. 
\end{enumerate}

An equivalent matrix version of Definition~\ref{generalcoupledreduc} is obtained by choosing an appropriate basis for each $\set V_j$.    
 Let $d_j$ be the dimension of the subspace $\set U_j$.  We have $ 0 \leq d_j \leq  n_j$.  If $d_j$ is positive, let 
  $\vect v_{j,1}, \ldots, \vect v_{j, d_j}$ be a basis for $\set U_j$ and let $T_j$ be a nonsingular $n_j \times n_j$ matrix which has 
  $\vect v_{j,1}, \ldots, \vect v_{j, d_j}$  in the first $d_j$ columns.  If $d_j = 0$, we may use any nonsingular $n_j \times n_j$ matrix for $T_j$.  
  Set $B_{ij} = T_i^{-1} A_{ij} T_j$; equivalently, 
  \[
 A_{ij} T_j = T_i B_{ij}.
 \]
  The first $d_j$ columns of $T_j$ are a basis for $\set U_j$, so 
 $A_{ij} (\set U_j) \subseteq \set U_i$ tells us the first~$d_j$ columns of 
  $A_{ij} T_j$ are in~$\set U_i$.   Hence, the first $d_j$ columns of $T_i B_{ij}$ are in~$\set U_i$, 
  so by the definition of $T_i$, each of the first $d_j$ columns of $T_i B_{ij}$ is a linear combination of the first 
  $d_i$ columns of $T_i$.  Therefore, each of the first~$d_j$ columns of $B_{ij}$ will 
  have zeroes in all entries below row~$d_i$, and the lower left hand corner of $B_{ij}$ is a block of zeroes of size $(n_i - d_i) \times d_j$.  When 
  $0 < d_i < n_i$ and $0 <  d_j < n_j$,  the matrix 
  $B_{ij}$ has the form
  \begin{equation}
  \label{basicblockform}
 B_{ij} =  \pmatrix{C \ \ \ \ \ \ & D \cr  0_{(n-d_i) \times d_j} & E},
  \end{equation}
  where $C$ is size $d_i \times d_j$ and represents the action of $A_{ij}$ on the subspace $\set U_j$.  The zero block in the lower left hand block has 
  size $(n- d_i) \times d_j$, while $D$ is $d_i \times (n_j-d_j)$ and $E$ is $(n_i-d_i) \times (n_j-d_j)$.  
  
  \begin{remark}  
  The block matrix~(\ref{basicblockform}) has a block of zeroes in the lower left hand corner of size $(n-d_i ) \times d_j$.  We also use this terminology
  for the cases $d_i = 0, \ d_i = n_i, \ d_j = 0, \ d_j =n_j$, in which case we mean the following.  
  If $d_i = 0$, the first $d_j$ columns of $B_{ij}$ are zero.  If $d_i = n_i$  there is no restriction on the form of $B_{ij}$.  
   If $d_j = 0$ there is no restriction on the form of $B_{ij}$.   If $d_j = n_j$ the last $n - d_i$ rows of $B_{ij}$ are zero.  
\end{remark}

  Conversely, if each $B_{ij} = T_i^{-1} A_{ij} T_j$ has the block form in~(\ref{basicblockform}), define  $\set U_i$ to be the subspace spanned by the 
  first $d_i$ columns of $T_i$.  The subspaces $\{ \set U_i \}_{i \in \set I}$ then satisfy $A_{ij}(\set U_j) \subseteq \set U_i$.    Hence, we have the following equivalent matrix form of Definition~(\ref{generalcoupledreduc}).
  
  \begin{definition} [Matrix version of coupled reducibility]
 \label{coupledreducmatrixform}
 Let  $\set A = \{A_{ij} \}_{i, j \in \set I}$. We say \set A is {\it reducible in the coupled sense},  or, {\it reducible by coupled similarity}, if there exist integers 
$\{d_i\}_{i \in \set I}$, with $0 \leq d_i  \leq n_i$, and nonsingular $n_i \times n_i$ matrices $T_i$, such that the following hold. 
\begin{enumerate}
\item At least one $d_i$ is positive. 
\item At least one $d_i$ is less than $n_i$. 
\item Each matrix $B_{ij} = T_i^{-1} A_{ij} T_j$ has a block of zeroes in the lower
 left hand corner of size $(n_i-d_i ) \times d_j$. 
 \end{enumerate}
 We say \set A is {\it properly reducible in the coupled sense} if $0 < d_i < n_i$ for at least one value of $i$. 
 We say \set A is {\it strongly reducible in the coupled sense} if $0 < d_i < n_i$ for every~$i $. 
  \end{definition}

  Full reducibility by coupled similarity occurs when, for each $i$, there is also a subspace $\hat {\set U_i}$ such that 
  $\set V_i  = \set U_i \oplus \hat {\set U_i}$,  and $A_{ij} (\hat {\set U_j})  \subseteq  \hat  {\set U_i}$
 for all $i, j \in \set I$.   
 For the corresponding matrix version, use a basis for $\set U_j$ in the first $d_j$ columns of $T_j$ and a basis for $\hat {\set U_j}$ in the 
 remaining $n_j - d_j$ columns.  For $0 < d_i < n_i$ and $0 < d_j < n_j$, the matrix $B_{ij} = T_i^{-1} A_{ij} T_j $ has the block form
  \begin{equation}
  \label{fullyreducibleblock}
 B_{ij} =  \pmatrix{C \ \ \ \ \ \  & 0_{d_i \times (n-d_j)} \cr 0_{(n-d_i) \times d_j} & E}.
  \end{equation}
The $d_i \times d_j$ matrix $C$ represents the action of $A_{ij}$ on $\set U_j$ and the $(n_i-d_i) \times (n_j-d_j)$ matrix $E$ represents the action of 
  $A_{ij}$ on $\hat {\set U_j}$.

  For the field of complex numbers we have unitary versions.  
 
 \begin{definition} [Unitary version of reducible in the coupled sense]
 \label{coupledreducmatrixformunitary}
 Let  \set A be a family of complex matrices. 
 We say  \set A is {\it unitarily reducible in the coupled sense} if the conditions of 
 Definition~\ref{coupledreducmatrixform} are satisfied with unitary matrices~$T_i$.   
  \end{definition}

  For complex \set A, reducibility by coupled similarity implies reducibility by coupled unitary similarity. 
  Simply use an orthonormal basis for each $\set U_i$, and extend it to an orthonormal basis for $\set V_i$ to obtain a unitary matrix for $T_i$.  
  If~\set A is fully reducible, and  $\set U_i$ and $\hat {\set U_i}$ are orthogonal subspaces,
 then, for each $\set V_i = \set U_i \oplus \hat {\set U_i}$, we can form a unitary matrix $T_i$   using 
an orthonormal basis for~$\set U_i$ for the first~$d_i$ columns and an orthonormal basis for~$\hat {\set U_i}$ for the remaining $n_i - d_i$ columns.  
Each $B_{ij}$ 
 then has the block form~(\ref{fullyreducibleblock}).
 
 Unitary reducibility matters in  the JISA model, because $A$ and $B$ are correlation matrices, and the appropriate linear change of variable 
 leads to a congruence, rather than a similarity.  
When $T_i$ is unitary, $T_i^{-1} = T_i^*$. For $T = T_1 \oplus T_2 \oplus \cdots  \oplus T_k$ we then have 
 $T^{-1} A T = T^* A T$.   
 
From the definition, it is clear that if \set A is strongly reducible, then it is also properly reducible, and if it is properly reducible, it is 
reducible.   We introduce some notation.   Fix an index set, \set I.  Use $\vert \set I \vert$ to denote the size of~\set I; when \set I is 
a finite set with $K$ elements, we assume $\set I = \{1, 2, \ldots, K\}$.   Fix a family $\{n_i\}_{i \in \set I}$ of positive integers, and a field \field F. 
Consider the set of all $\set A = \{A_{ij}\}_{i, j \in \set I}$, where $A_{ij}$ is an $n_i \times n_j$ matrix with entries from \field F.  We use 
$Red(\field F,    \{n_i\}_{i \in \set I})$ to denote the set of all such families \set A which are reducible in the coupled sense.   
  We use $PropRed(\field F, \{n_i\}_{i \in \set I})$ for the set of all such \set A which are properly reducible in the coupled sense, and  
  $StrRed(\field F, \{n_i\}_{i \in \set I})$ for the set of all such \set A that are strongly reducible in the coupled sense.  
  When all $n_i$'s have the same value, $n$, and $\vert \set I \vert  = K$, we use the notations $Red(\field F, n, K), \  PropRed(\field F, n, K)$, 
  and $StrRed(\field F, n, K)$.

 When $n_i = 1$, the space $\set V_i = \field F$ is one dimensional and has no nonzero proper subspaces. 
  Hence, $StrRed(\field F, \{n_i\}_{i \in \set I})$ is the empty set if $n_i = 1$ for some $i$, and $PropRed(\field F, \{n_i\}_{i \in \set I})$
  is the empty set when $n_i = 1$ for all $i$.

  From Definition~\ref{generalcoupledreduc} it is obvious that
 \begin{equation}
 \label{subsetinclusions}
 StrRed(\field F, \{n_i\}_{i \in \set I}) \subseteq PropRed(\field F, \{n_i\}_{i \in \set I}) \subseteq Red(\field F, \{n_i\}_{i \in \set I}).
 \end{equation}  
 Using the superscript ``$C$" to indicate the complement of a set, we then have 
 \begin{equation}
 \label{subsetcomplements}
 Red^C(\field F, \{n_i\}_{i \in \set I}) \subseteq PropRed^C(\field F, \{n_i\}_{i \in \set I}) \subseteq StrRed^C(\field F, \{n_i\}_{i \in \set I}).
 \end{equation}  
 The symbol ``$\subseteq$" means ``subset of or equal to."  We use~``$\subset$" to indicate ``proper subset of."  
 One might expect that ``$\subseteq$" can generally be replaced by~``$\subset$" in~(\ref{subsetinclusions}) and~(\ref{subsetcomplements}).
 This is correct when \set I has at least four elements, and $n_i \geq 2$ for at least one value of $i$.   
 Furthermore, for $\vert \set I \vert \geq 2$, we have $StrRed(\field F, \{n_i\}_{i \in \set I}) \subset PropRed(\field F, \{n_i\}_{i \in \set I})$, provided 
 $n_i \geq 2$ for at least one~$i$.  
 However, for $\vert \set I \vert = 2$ and $\vert \set I \vert =  3$, whether $PropRed(\field F, \{n_i\}_{i \in \set I})$ is equal to, or is a proper subset of,   
 $Red(\field F, \{n_i\}_{i \in \set I})$ depends on the 
 field \field F, and on the~$n_i$'s.  
  The appendix treats this in more detail.  Here is a summary of what is shown there. 
 \begin{enumerate}
 \item  For any field \field F, if $\vert \set I \vert  \geq 4$ and $n_i \geq 2$ for at least one $i$,  
 \[StrRed(\field F, \{n_i\}_{i \in \set I}) \subset PropRed(\field F, \{n_i\}_{i \in \set I}) \subset Red(\field F, \{n_i\}_{i \in \set I}).\] 
 Consequently,  
 \[Red^C(\field F, \{n_i\}_{i \in \set I}) \subset PropRed^C(\field F, \{n_i\}_{i \in \set I}) \subset StrRed^C(\field F, \{n_i\}_{i \in \set I}).\]
 \item For any field \field F, if $\vert \set I \vert  \geq 2$ and $n_i \geq 2$ for at least one $i$,
 \[StrRed(\field F, \{n_i\}_{i \in \set I}) \subset PropRed(\field F, \{n_i\}_{i \in \set I}).\]
  \item  If  \field F is algebraically closed and $n \geq 2$, then 
  \[ PropRed(\field F, n, 2) = Red(\field F, n, 2) \  {\rm and} \  
  PropRed(\field F, n, 3) = Red(\field F, n, 3).\]
  \item For the field, \blabold R, of real numbers, when $n = 2$, we have 
   \[ PropRed(\blabold R, 2, 2) \subset Red(\blabold R, 2, 2) \ {\rm and} \   
   PropRed(\blabold R, 2, 3) \subset Red(\blabold R, 2, 3).\]
  For $n \geq 3$, 
   \[ PropRed(\blabold R, n, 2) = Red(\blabold R, n, 2) \ {\rm and} \  
   PropRed(\blabold R, n, 3) = Red(\blabold R, n, 3).\]
    \end{enumerate}
  %


 \section {A coupled version of Schur's Lemma}
 \label{coupledSchurLemma}
 
 The main result of this section is Theorem~\ref{maintheorem}, a coupled version of Schur's Lemma for reducibility and proper reducibility.  
 Section~\ref{sectionstrongreduc} deals with the more complicated version for strong reducibility.
  
 Consider families $\set A = \{A_{ij} \}_{i, j \in \set I}$ and $\set B = \{B_{ij} \}_{i, j \in I}$, where $A_{ij}$ is $n_i \times n_j$ and 
 $B_{ij}$ is $m_i \times m_j$, linked by equations $A_{ij} X_j = X_i B_{ij}$, where 
  $X_i$ is $n_i \times m_i$.   Recall that $A_{ij}$ is a linear transformation from $\set V_j$ to $\set V_i$, and
  $B_{ij}$ is a linear transformation from $\set W_j$ to $\set W_i$   
   The matrix $X_i$ is a linear transformation from $\set W_i$  to $\set V_i$.  Note that $ker(X_i)$ is a subspace of $\set W_i$ and  $range(X_i)$ is 
   a subspace of $\set V_i$.  

Reviewing the proof of Schur's Lemma (Theorem~\ref{SchurLemma}), the key facts are that $ker(P)$ is an invariant subspace of $\{B_i \}_{i \in \set I}$, 
and $range(P)$ is an invariant subspace of $\{A_i \}_{i \in \set I}$.  For the case of complex matrices with  $A_i = B_i$ for all~$i$,  
any eigenspace of $P$ is an invariant subspace of $\{A_i \}_{i \in \set I}$.  The following ``coupled" versions of these facts are used to prove coupled versions of 
Schur's lemma for \set A, \set B.  In the coupled versions, the $X_i$'s play the role of the $P$.

\begin{lemma}
\label{lemmabasicfacts}
Let $\set A = \{A_{ij} \}_{i, j \in \set I}$ and $\set B = \{B_{ij} \}_{i, j \in I}$, where $A_{ij}$ is $n_i \times n_j$ and 
 $B_{ij}$ is $m_i \times m_j$.  Let $X_i$ be $n_i \times m_i$ and suppose for all $i, j \in \set I$, we have 
   $A_{ij} X_j = X_i B_{ij}.$
  If $m_i = n_i$ for some $i$, then, for any scalar $\alpha$, define 
$ \set U_i(\alpha) =     \{ \vect v \  \big|  \ X_i \vect v = \alpha \vect v \}.$
The following hold for all $i, j \in \set I$.  
\begin{enumerate}
\item $B_{ij} (ker(X_j)) \subseteq ker(X_i)$. 
\item $
A_{ij} (range (X_j))  \subseteq range(X_i)$.
\item  If $\set A = \set B$, then $A_{ij} (\set U_j (\alpha))  \subseteq \set U_i(\alpha)$.
\end{enumerate}
\end{lemma}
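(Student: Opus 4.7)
The plan is to mimic, block-index by block-index, the standard Schur's Lemma computations used in Theorem~\ref{SchurLemma}, with each $X_i$ playing the role that $P$ plays in the classical proof. In that proof, the only tool was the intertwining relation $A_i P = P B_i$; here the analogous tool is the coupling relation $A_{ij} X_j = X_i B_{ij}$, applied for a fixed but arbitrary pair $i,j \in \set I$. All three assertions are one-line consequences of this relation together with the definitions of kernel, range, and $\alpha$-eigenspace.

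For part~1, I would pick $\vect w \in \ker(X_j)$ and compute $X_i(B_{ij}\vect w) = (X_i B_{ij})\vect w = A_{ij}(X_j \vect w) = A_{ij}\vect 0 = \vect 0$, so $B_{ij}\vect w \in \ker(X_i)$. For part~2, I would pick $\vect v \in \mathrm{range}(X_j)$, write $\vect v = X_j \vect w$ for some $\vect w \in \set W_j$, and then $A_{ij}\vect v = A_{ij} X_j \vect w = X_i(B_{ij}\vect w) \in \mathrm{range}(X_i)$. In both cases the hypothesis $m_i = n_i$ plays no role, so these parts hold for arbitrary block sizes. For part~3, I first note that the assumption $\set A = \set B$ forces $n_i = m_i$ for all $i \in \set I$, so each $X_i$ is square and the eigenspace $\set U_i(\alpha)$ is well defined; moreover the coupling collapses to $A_{ij} X_j = X_i A_{ij}$. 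Given $\vect v \in \set U_j(\alpha)$ I would compute
\[
X_i(A_{ij}\vect v) = (X_i A_{ij})\vect v = (A_{ij} X_j)\vect v = A_{ij}(\alpha \vect v) = \alpha(A_{ij}\vect v),
\]
which places $A_{ij}\vect v$ in $\set U_i(\alpha)$.

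There is no real obstacle in this lemma; the content is bookkeeping rather than mathematics, and the only point worth being careful about is the wording of the hypothesis ``$m_i = n_i$ for some $i$'': parts~1 and~2 are genuinely dimension-free, whereas part~3 implicitly requires $m_i = n_i$ at both indices $i$ and $j$ appearing, which is supplied automatically by the condition $\set A = \set B$. The lemma is stated this way precisely so that it can be plugged into the forthcoming coupled Schur's Lemma (Theorem~\ref{maintheorem}), where the three families of subspaces $\{\ker(X_i)\}$, $\{\mathrm{range}(X_i)\}$, and $\{\set U_i(\alpha)\}$ will serve as candidate reducing sets against which the coupled irreducibility of $\set A$ or $\set B$ will be invoked.
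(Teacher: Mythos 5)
Your proposal is correct and follows exactly the same one-line computations as the paper's proof: the identical kernel calculation for part~1, the range calculation for part~2, and the eigenspace calculation for part~3 using $A_{ij}X_j = X_iA_{ij}$. Your added observation that $\set A = \set B$ forces $n_i = m_i$ for all $i$ (making part~3's eigenspaces well defined) is a harmless clarification the paper leaves implicit.
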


\begin{proof}
For any $\vect w \in ker(X_j)$,  we have $X_i ( B_{ij} \vect w) =  A_{ij} X_j \vect w = 0$.  Hence,  $B_{ij} \vect w \in ker(X_i)$.  
This proves  1.
 
For $\vect w \in \set W$, we have 
$A_{ij} (X_j  \vect w) = X_i (B_{ij} \vect w) \in range(X_i)$, proving  2.

Finally, suppose $\set A = \set B$.  Then $A_{ij} X_j = X_i A_{ij}$ for all $i, j$.  Let $\vect v \in \set U_j(\alpha)$.  Then 
$X_i(A_{ij} \vect v) = A_{ij} X_j \vect v = \alpha (A_{ij} \vect v)$, showing $A_{ij} \vect v \in \set U_i(\alpha)$.  
\end{proof}

If $m_i = n_i$ and $\alpha$ is an eigenvalue of $X_i$, with $\alpha \in \field F$, then $\set U_i(\alpha)$ is the corresponding eigenspace.  
If $\alpha$ is not an eigenvalue of $X_i$, then $\set U_i(\alpha)$ is the zero subspace.

  We now state a version of Schur's Lemma for families that are irreducible in the coupled sense.  The proofs simply extend 
  the argument used to prove the usual Schur Lemma.   

\begin{theorem} 
\label{maintheorem}
Let $\set A = \{A_{ij} \}_{i, j \in \set I}$ and $\set B = \{B_{ij} \}_{i, j \in I}$, where $A_{ij}$ is $n_i \times n_j$ and 
$B_{ij}$ is $m_i \times m_j$.  Let $X_i$ be $n_i \times m_i$ and suppose for all $i, j \in \set I$, we have 
   $A_{ij} X_j = X_i B_{ij}$.
\begin{enumerate}
\item Suppose both \set A and \set B are irreducible in the coupled sense.  
Then either $X_i = 0$ for all $i$, or $X_i$ is nonsingular for all $i$.  In the latter case, $m_i = n_i$ for all $i$.  
If $\set A = \set B$, and \set A is a family of complex matrices, then there is a scalar $\alpha$ such that $X_i = \alpha I_{n_i}$ for all $i$.  
\item Suppose neither \set A nor \set B is properly reducible in the coupled sense.  
Then for each $i$, either  $X_i = 0$ or $X_i$ is nonsingular.  If $X_i$ is nonzero we must have $m_i = n_i$.  
If $\set A = \set B$ and consists of complex matrices, then any nonzero $X_i$ is a scalar multiple of $I_{n_i}$. 
\end{enumerate}
\end{theorem}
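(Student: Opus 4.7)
The plan is to extend the classical Schur argument using the three facts assembled in Lemma~\ref{lemmabasicfacts}. For Part~1, I would consider the families of subspaces $\{ker(X_i)\}_{i \in \set I}$ (with $ker(X_i) \subseteq \set W_i$) and $\{range(X_i)\}_{i \in \set I}$ (with $range(X_i) \subseteq \set V_i$). By parts~1 and~2 of the Lemma, the first is a candidate reducing set for $\set B$ and the second for $\set A$. Coupled irreducibility then forces each family to consist either entirely of zero subspaces or entirely of ambient subspaces: either every $ker(X_i) = \set W_i$, in which case every $X_i$ vanishes, or every $ker(X_i) = \{0\}$; the analogous dichotomy holds for the ranges. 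Combining, either all $X_i = 0$, or each $X_i$ is simultaneously injective and surjective, forcing $m_i = n_i$ and $X_i$ nonsingular.

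For the subcase $\set A = \set B$ over $\blabold C$, I would pick any $i_0$ and any eigenvalue $\alpha$ of $X_{i_0}$, which exists because $X_{i_0}$ is a square complex matrix, and consider the family $\{\set U_i(\alpha)\}_{i \in \set I}$. Part~3 of the Lemma shows this family satisfies the coupled invariance condition for $\set A$, and $\set U_{i_0}(\alpha) \neq \{0\}$ by choice of $\alpha$. Coupled irreducibility of $\set A$ then forces $\set U_i(\alpha) = \set V_i$ for every $i$, which is exactly the statement that $X_i = \alpha I_{n_i}$ for all $i$ with one common scalar $\alpha$.

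For Part~2, the weaker hypothesis ``not properly reducible in the coupled sense'' means that in any reducing set the subspace at \emph{each} index must be either $\{0\}$ or the whole ambient space. Applying this observation to the kernel and range families again, one concludes, now index by index, that $ker(X_i) \in \{\{0\},\set W_i\}$ and $range(X_i) \in \{\{0\},\set V_i\}$. A quick per-index case check shows the only consistent combinations are $X_i = 0$ or $X_i$ a bijection from $\set W_i$ onto $\set V_i$, giving the stated dichotomy. The eigenspace argument from Part~1 then runs through unchanged at each $i$ with $X_i \neq 0$: choose an eigenvalue $\alpha_i$ of $X_i$, and since $\set U_j(\alpha_i) \in \{\{0\},\set V_j\}$ for every $j$, the nonzero space $\set U_i(\alpha_i)$ must equal $\set V_i$, i.e., $X_i = \alpha_i I_{n_i}$. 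Distinct nonzero $X_i$ may now produce distinct scalars, consistent with the weaker hypothesis.

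The main subtlety, rather than a genuine obstacle, is keeping the two flavors of irreducibility straight. Under full coupled irreducibility the conclusions are necessarily \emph{global} across all indices, a single alternative holding simultaneously for every $i$, whereas under merely ``not properly reducible'' the per-index behavior decouples, because the prohibition on even one intermediate invariant subspace is precisely what blocks mixed configurations. Everything else is a direct transcription of the standard Schur's Lemma proof to the coupled setting, supplied by Lemma~\ref{lemmabasicfacts}, with the auxiliary observation that $n_i = m_i$ is forced only at those indices where $X_i$ turns out to be nonsingular.
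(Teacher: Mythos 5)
Your proposal is correct and follows essentially the same route as the paper's own proof: applying coupled irreducibility (respectively, the absence of proper coupled reducibility) to the invariant families $\{ker(X_i)\}$, $\{range(X_i)\}$, and $\{\set U_i(\alpha)\}$ supplied by Lemma~\ref{lemmabasicfacts}, with the same global-versus-per-index dichotomy in the two parts. Your closing observation that part~2 permits distinct scalars $\alpha_i$ at different indices matches the remark the paper makes immediately after the theorem.
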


\begin{proof}
For part 1, assume \set A and \set B are both coupled irreducible.  
Consider the subspaces $ker(X_i), i \in \set I$.  Since \set B is irreducible in the coupled sense, statement~1 of Lemma~\ref{lemmabasicfacts} tells us  there are only two possibilities:  
either $ker(X_i) = \{0\}$ for all $i$, or $ker(X_i) = \set W_i$ for all $i$.  In the 
latter case, $X_i = 0$ for all $i$ and so we are done.   

Suppose now that $ker (X_i)  = \{0\}$ for all $i$.   We now use the subspaces $range(X_i), i \in \set I$.  
Since \set A is irreducible in the coupled sense, part~2 of Lemma~\ref{lemmabasicfacts}  tells us the only possibilities are 
$range(X_i) = \{0\}$ for all~$i$ or $range(X_i) = \set V_i$ for all $i$.  If $range(X_i) = \{0\}$ for all $i$, then 
$X_i = 0$ for all~$i$.  Otherwise, we have both $ker X_i = \{0\}$ and $range(X_i) = \set V_i$ for all $i$. 
Hence each $X_i$ is nonsingular and $m_i = n_i$.  

Now suppose $\set A = \set B$ and $\field F = \blabold C$.  Let $\lambda$ be an eigenvalue of $X_p$ for some fixed $p \in \set I$.   
Part~3 of  Lemma~\ref{lemmabasicfacts} tells us 
$A_{ij} ( \set U_j(\lambda) ) \subseteq \set U_i(\lambda)$ for all~$i,j$.    Since~\set A is irreducible in the coupled sense, there are then only two 
  possibilities for the subspaces $\set U_i(\lambda)$: either they are all zero, or $\set U_i= \set V_i$ for all $i$.  Since $\lambda$ was chosen to be
  an eigenvalue of $X_p$, we know $\set U_p(\lambda)$ is not zero.  Therefore, $\set U_i(\lambda) = \set V_i$ for all $i $ and hence
  $X_i = \lambda I_{n_i}$ for all~$i$.
  
  For part 2, assume neither \set A nor \set B is properly reducible in the coupled sense.  
  Consider $ker(X_i)$.  Since \set B is not properly reducible in the coupled sense, Lemma~\ref{lemmabasicfacts} tells us
$ker(X_i)$ cannot be a nonzero, proper subspace of~$\set W_i$.  Hence, for each particular $i$, either $ker(X_i ) = \{0\}$ or 
$ker(X_i) = \set W_i$.  In the latter case, $X_i = 0$.  

Suppose $ker(X_i )= \{0\}$ for some $i$.   
Since \set A is not properly reducible in the coupled sense, Lemma~\ref{lemmabasicfacts} tells us 
$range(X_i)$ is either $\{0\}$  or $\set V_i$.  If $range(X_i) = \{0\}$ then 
$X_i = 0$ .  Otherwise, we have both $ker X_i = \{0\}$ and $range(X_i) = \set V_i$, so $X_i$ is nonsingular and $m_i = n_i$.  

Now suppose $\set A = \set B$  is a family of complex matrices.  Suppose~$X_p \neq 0$ for some $p$.  Let  $\lambda_p$ be an eigenvalue of $X_p$.  Note 
$\lambda_p \neq 0$ because $X_p$ is nonsingular.  By Lemma~\ref{lemmabasicfacts}, we have
$A_{ij} ( \set U_j(\lambda_p) ) \subseteq \set U_i(\lambda_p)$ for all $i, j$.     
Since~\set A is not properly reducible in the coupled sense, each $\set U_i(\lambda_p)$ is either zero or  the full vector space $\set V_i$.  
Since $\lambda_p$ is an eigenvalue of $X_p$, the space $\set U_p(\lambda_p)$ is not zero.  Therefore, $\set U_p(\lambda_j) = \set V_p$ and 
  $X_p = \lambda_p I_{n_p}$.
 \end{proof}
 
 \begin{remark} The ordinary version of Schur's Lemma, Theorem~\ref{SchurLemma}, applies to the case where both  \set A and \set B are 
 irreducible in the sense of Definition~\ref{transformationreducible}, and $X_i = P$ for all $i$.  
 \end{remark}
 
 Note the different conclusions for the two parts of Theorem~\ref{maintheorem}.  For part~1, either {\it all} $X_i$'s are zero, or {\it all} are nonsingular.  When 
 $\set A = \set B$ and $\field F = \blabold C$, {\it all} the $X_i$'s  are the same scalar multiple of the identity matrix.  In part~2, there are more options for the $X_i$'s.  Each 
 $X_i$ is either zero or nonsingular, but some can be zero and others nonsingular.  For $\set A = \set B$ and $\field F = \blabold C$, the 
 proof for part~2 gives 
 $X_p = \lambda_p I_{n_p}$ for a particular value of~$p$; it does not show every nonzero $X_i$ equals the same scalar multiple 
 of the identity matrix.  
 
 The broader range of options for the $X_i$'s in part~2 makes sense when we consider that, at least for $\vert \set I \vert  \geq 4$, we have 
 $PropRed(\field F, \{n_i\}_{i \in \set I}) \subset Red(\field F, \{n_i\}_{i \in \set I})$, and hence 
 $Red^C(\field F, \{n_i\}_{i \in \set I}) \subset PropRed^C(\field F, \{n_i\}_{i \in \set I})$.  
 Part~2 applies to a broader set of pairs \set A, \set B than part~1.

Consider the situation in part~2 of Theorem~\ref{maintheorem}.  Suppose $X_i = 0$ and~$X_j$ is nonsingular.  
The equation $A_{ij} X_j = X_i B_{ij}$ then tells us $A_{ij} = 0$, while 
$A_{ji} X_i = X_j B_{ji}$ gives $B_{ji} = 0$.  Set 
\[\set I_0 = \{ i \in \set I \ \vert \  X_i = 0 \} \hskip .2 in {\rm and} \hskip .2 in \set I_{non}= \{ i \in \set I \ \vert \ X_i \  \rm{ is  \ nonsingular} \}.\]  
We have 
$A_{ij} = 0$ and $B_{ji} = 0$ whenever $i \in \set I_0$ and $j \in \set I_{non}$.  For example, suppose $\set I = \{1, 2, \ldots, K\}$, and, for some $0 < s < K$, 
we have 
$\set I_0 = \{1, 2, \ldots, s\}$ and  $\set I_{non} = \{s+ 1, s+2, \ldots, K\}$.  The $N \times N$ matrix $A$ then has only zero blocks in the upper right 
hand corner formed from the first $s$ rows and last $K - s$ columns.  The $M \times M$ matrix $B$ has zero blocks in the lower left hand corner 
formed by the last $K - s$ rows and first $s$ columns.  
 \[
 A = \pmatrix{A_{11} & \cdots & A_{1s} & 0 & \cdots & 0 \cr 
 \vdots &  & \vdots & \vdots & & \vdots \cr 
 A_{s1} & \cdots & A_{ss} & 0 & \cdots & 0 \cr 
 A_{(s+1)1} & \cdots & A_{(s+1)s}  & A_{(s+1)(s+1)} & \cdots & A_{(s+1)K} \cr
 \vdots & & \vdots & \vdots & & \vdots \cr
 A_{K1} & \cdots & A_{K s}  & A_{K(s+1)} & \cdots & A_{KK} }.
 \]
 Returning to the case of general \set I, one can check that   \set A is coupled reducible via the subspaces $\set U_i = \{0\}$ for $i \in \set I_0$, and 
$\set U_i =  \set V_i$   for $i \in \set I_{non}$.   
 The family~\set B is coupled reducible via the subspaces $\set U_i  = \set W_i$ when~$i \in \set I_0$,
 and $\set U_i  = \{0\}$ when~$i \in \set I_{non}$.

 
  \section{Strong reducibility and Schur's Lemma}
  \label{sectionstrongreduc}
 We now consider strongly coupled reducibility.  
 Our goal is a version of Schur's lemma for families that are not strongly reducible in the coupled sense, with a conclusion similar to 
 that of Theorem~\ref{maintheorem}: each $X_i$ is either zero or nonsingular.  
 The next example shows that for such a conclusion, we need some restrictions on $A_{ij}$ and  $B_{ij}$.

\begin{example} Let $n = m = 2$ (so $\set V = \set W=  \field F^2$), and  $K = 2$.  Put 
\[
A_{11} = A_{22}   =  \pmatrix{0 & 1 \cr 0 & 0}  \hskip .3 in 
A_{21}   =   \pmatrix{0 & 0 \cr 0 & 0}  \hskip .3 in
A_{12}   =   \pmatrix{a & b \cr c & d}
\]
\[
B_{11} = B_{22}   =   \pmatrix{0 & 1 \cr 0 & 0}   \hskip .3 in
B_{21}   =   \pmatrix{a & b \cr c & d}  \hskip .3 in
B_{12}   =   \pmatrix{0 & 0 \cr 0 & 0}. 
\]
In terms of the matrices $A, B$:  
\begin{eqnarray}
A = \pmatrix {0 & 1 & | & a & b \cr 0 & 0 & | & c & d \cr  \hline \cr 0 & 0 & | & 0 & 1 \cr 0 & 0 & | & 0 & 0}  \ \ \ \ \ 
B = \pmatrix {0 & 1 & | & 0 & 0 \cr 0 & 0 & | & 0 & 0 \cr  \hline \cr a & b & | & 0 & 1 \cr c & d & | & 0 & 0}. \nonumber
\end{eqnarray}
Let $\set U$ be the subspace spanned by $\vect e_1 = \pmatrix{1 \cr 0}$.
One may easily check that~\set A is properly reducible in the coupled sense  with $\set U_1 = \set U$ and $\set U_2 = \{0\}$, 
while~\set B is properly reducible in the coupled sense with $\set U_1 = \{0\}$ and $\set U_2 = \set U$.  However, if $c \neq 0$, then neither \set A nor \set B is 
strongly reducible in the coupled sense.  The reason is that \set U is the only nonzero, proper invariant subspace for the diagonal blocks,
$\pmatrix{0 & 1 \cr 0 & 0}$,
of $A$ and $B$, so $\set U_1 = \set U_2  = \set U$ is the only possible choice for nonzero, proper subspaces $\set U_1$ and $\set U_2$.
If $c \neq 0$, then $A_{12} (\set U) \not\subset \set U$, and $B_{21} (\set U) \not\subset \set U$.
So if $c\neq 0$, 
neither \set A nor \set B is strongly reducible in the coupled sense.  
Set $X_1 =  \pmatrix{0 & 1 \cr 0 & 0}, \ \ \ X_2 = \pmatrix{0 & 0 \cr 0 & 0}$ and $X = X_1 \oplus X_2$.  One may check that $AX =XB = 0$ and hence
$A_{ij} X_j = X_i B_{ij}$ for $i, j = 1, 2$.  The point is that  the matrix $X_1$ is neither zero nor nonsingular.  
\end{example}

Our theorem for coupled pairs \set A, \set B that are not strongly reducible will be for the case when $n_i = n$ and $m_i = m$ for all $i$.
It will have a hypothesis about graphs related to \set A and \set B; roughly speaking, this hypothesis will tell us there are ``enough" 
nonsingular $A_{ij}$'s and $B_{ij}$'s.    
Although our main result assumes \set A is a family of $n \times n$ matrices and \set B is a family of $m \times m$ matrices, we 
define the graphs for families with matrices of any size.  

 Recall that a matrix is said to have 
{\em full column rank} if the columns are linearly independent; thus, the rank of the matrix equals the number of columns.  
If $A$ is a $p \times q$ matrix with full column rank, and $\set U$ is a subspace of $\field F^q$, then $A (\set U)$ has the same
dimension as $\set U$.   

Consider $\set A = \{A_{ij} \}_{i, j \in I}$, and subspaces $\{\set U_i\}_{i \in \set I}$, satisfying $A_{ij}(\set U_j) \subseteq \set U_i$ for all $i, j$.     
 Let $d_i$ be the dimension of $\set U_i$.   
  If $A_{ij}$ has full column rank, then $A_{ij} (\set U_j) \subseteq \set U_i$ tells us $d_j \leq d_i$.  If $A_{ji}$ also 
  has full column rank, then we also have $d_i \leq d_j$, and hence $d_i = d_j$. When
  $A_{ij}$ and $A_{ji}$ both have full column rank, $n_j \leq n_i$ and $n_i \leq n_j$, so $n_i = n_j$;  
  hence,  $A_{ij}$ and $A_{ji}$ are actually square, nonsingular matrices.
  If all of the  $A_{ij}$'s  have full column rank,  then all of the $n_i$'s have the same value, $n$, and all of the subspaces~$\set U_j$  have the same dimension, $d$.
  However,  we need not assume {\it all}  of the matrices~$A_{ij}$ are nonsingular in order to show the $\set U_j$'s all have the same dimension.  
To explore this further, we introduce a directed graph  in which directed edges correspond to the $A_{ij}$'s of full column rank.  
 
 \begin{definition} Let $\set A = \{A_{ij} \}_{i, j \in I}$, with $A_{ij}$ of size $n_i \times n_j$.
 The {\em directed graph (digraph) of \set A}, denoted $\set D({\set A})$, is the graph on vertices $\{v_i\}_{i \in \set I}$, 
 such that there is a directed edge $(v_i, v_j)$ from $v_i$ to $v_j$ if and only if 
 $A_{ij}$ has full column rank.  
 \end{definition}
 
 For a finite index set, $\set I = \{1, \ldots, K\}$, there are $K$ vertices.  
 If $n_i = 1$  for all $i$, our $\set D({\set A})$  is just the usual directed graph associated with a $K \times K$ matrix.
 
More generally, there is a vertex for each $i \in \set I$, so there could be infinitely many vertices.  We use the same 
definition for directed walk as for graphs with a finite number of vertices.  A {\em directed walk} is a finite sequence of vertices, 
$v_{i_1}, v_{i_2}, \ldots, v_{i_p}$, such that $(v_{i_{j}}, v_{i_{(j+1)}})$ is a directed edge for $1 \leq j \leq (p-1)$.  In this case, we  write 
$v_{i_1}\to  v_{i_2} \to  \cdots \to v_{i_p}$.  
Vertices $v$ and  $w$ in a directed graph \set D  are said to be {\it strongly connected} if there is a directed walk from 
 $v$ to $w$ and a directed walk from $w$ to $v$.   We say~\set D is
  strongly connected if each pair of vertices of \set D is strongly connected. 
 \begin{proposition}
 \label{subspacedimfact} 
 Let  $\set A = \{A_{ij} \}_{i, j \in \set I}$ and suppose the subspaces $\{U_i\}_{i \in \set I}$ satisfy  $A_{ij} (\set U_j) \subseteq \set U_i$,
for all $i, j$. 
 Then the following hold.  
 \begin{enumerate}
 \item If there is a directed walk from $v_i$ to $v_j$ in $\set D(\set A)$, then $n_j \leq n_i$, and 
 $dim(\set U_j) \leq dim(\set U_i)$.  
 
 \item If the vertices $v_i$ and $v_j$ are strongly connected in $\set D(\set A)$, then $n_i = n_j$, and 
$dim(\set U_j) = dim(\set U_i)$
 
 \item If $\set D(\set A)$ is strongly connected, all of the~$n_i$'s are equal, and all of the subspaces $\set U_i$ have the same dimension. 
 \end{enumerate}
\end{proposition}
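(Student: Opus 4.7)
The plan is to reduce everything to the single-edge observation already discussed immediately before the proposition, and then chain it along walks. Specifically, a directed edge $(v_i,v_j)$ in $\set D(\set A)$ means $A_{ij}$ has full column rank, which forces $n_j\le n_i$ (rank cannot exceed the number of rows) and makes $A_{ij}$ dimension-preserving on subspaces of $\set V_j$. Combined with the invariance hypothesis $A_{ij}(\set U_j)\subseteq\set U_i$, this yields $\dim(\set U_j)=\dim A_{ij}(\set U_j)\le\dim(\set U_i)$. So both inequalities in part~1 hold for a single directed edge.

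To prove part~1 in full, I would induct on the length $p$ of a directed walk $v_{i_1}\to v_{i_2}\to\cdots\to v_{i_p}$. The base case $p=2$ is the single-edge observation above. For the inductive step, the existence of the walk $v_{i_1}\to\cdots\to v_{i_{p-1}}$ gives $n_{i_{p-1}}\le n_{i_1}$ and $\dim(\set U_{i_{p-1}})\le\dim(\set U_{i_1})$, while the last edge $v_{i_{p-1}}\to v_{i_p}$ gives $n_{i_p}\le n_{i_{p-1}}$ and $\dim(\set U_{i_p})\le\dim(\set U_{i_{p-1}})$; transitivity of $\le$ finishes the step.

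Part~2 follows immediately: strong connectedness of $v_i$ and $v_j$ supplies a walk from $v_i$ to $v_j$ and a walk from $v_j$ to $v_i$, so part~1 gives $n_j\le n_i\le n_j$ and $\dim(\set U_j)\le\dim(\set U_i)\le\dim(\set U_j)$, forcing both pairs to be equal. Part~3 is then just the observation that when $\set D(\set A)$ is strongly connected every pair of vertices satisfies the hypothesis of part~2, so all $n_i$ share a common value and all $\dim(\set U_i)$ share a common value.

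There is essentially no obstacle here; the only point to be careful about is the dimension-preservation step for a full-column-rank linear map, namely that if $A_{ij}$ is $n_i\times n_j$ of rank $n_j$ and $\set U_j\subseteq\set V_j$, then the restriction of $A_{ij}$ to $\set U_j$ is injective and hence $\dim A_{ij}(\set U_j)=\dim(\set U_j)$. This is precisely the fact highlighted in the paragraph preceding the definition of $\set D(\set A)$, so no new ingredient is required beyond invoking it and then chaining along walks.
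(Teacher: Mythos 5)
Your proposal is correct and matches the paper's own proof: both reduce to the single-edge fact that a full-column-rank $A_{ij}$ forces $n_j \leq n_i$ and, via injectivity on $\set U_j$, $\dim(\set U_j) \leq \dim(\set U_i)$, and then chain these inequalities along a directed walk (the paper writes the chain explicitly right-to-left rather than as a formal induction, but this is the same argument), with parts~2 and~3 following by antisymmetry exactly as you state.
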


\begin{proof}
Let $d_i = dim(\set U_i)$.  If  $(v_i, v_j)$ is a directed edge of $\set D(\set A)$, then $A_{ij}$ has full column rank, so, as we have already observed,  
$n_j \leq n_i$ and $d_j \leq d_i$.   

More generally,  suppose
$  v_i = v_{i_1} \to v_{i_2} \to \cdots \to v_{i_p} = v_j$
 is a directed walk from $v_i$ to $v_j$ in $\set D(\set A)$.  Working from right to left, we have 
  \[n_j =  n_{i_p} \leq n_{i_{p-1}} \leq \cdots  \leq n_{i_2}  \leq n_{i_1} = n_i \]
  and
  \[d_j =  d_{i_p} \leq d_{i_{p-1}} \leq \cdots  \leq d_{i_2}  \leq d_{i_1} = d_i.  \]
  Hence,  $n_j \leq n_i$ and $d_j \leq d_i$.  
  
  If $v_i$ and $v_j$ are strongly connected, there  is a directed walk from $v_i$ to $v_j$ and a directed walk from $v_j$ to $v_i$.
  So $n_i = n_j$ and $d_i = d_j$. 
  
   If $\set D(\set A)$ is strongly connected, then, for all $i, j$, we have $d_i = d_j$  and $n_i = n_j$, so all of the subspaces $\set U_j$ have the 
  same dimension and all of the~$n_i$'s have the same value.  
\end{proof}

  As an example, suppose $\set I = \{1, \ldots, K\}$ and $A_{12}, A_{23},  \ldots A_{K-1, K}, A_{K1}$ all have full column rank.  Then $\set D( \set A )$ contains the directed cycle
   \[v_1 \to v_2  \to \cdots \to v_{K-1} \to v_K \to v_1,\]
   and is  strongly connected.  
   
   If  $\set D(\set A)$ is not strongly connected,  the strong components  
   identify sets of~$n_i$'s which must be equal, and sets of subspaces $\set U_i$  which must have the same dimension.  
   For each strong component, \set C,  of \set D(\set A), all~$n_i$'s corresponding to vertices of $\set C$ must be equal, 
   and all subspaces $\set U_i$ corresponding to 
   vertices of~$\set C$ must have the same dimension.  For a finite $\set I$,  
   we can use the strong components to put the $N \times N$ matrix $A$ into a block triangular 
   form in which none  of the $A_{ij}$'s below the diagonal blocks has full column rank.     (See~\cite{BR91},  section 3.2.)
   
For the proofs of coupled versions of Schur's Lemma, the subspaces $\set U_i$  of interest are the kernels and ranges of the matrices $\{X_i \}_{i \in \set I}$.

\begin{proposition}
\label{dimensionfacts}
Let  $\set A = \{A_{ij} \}_{i, j \in \set I}$   and    $\set B = \{B_{ij} \}_{i, j \in \set I}$.  Let $X_i$ be $n_i \times m_i$, and  
 suppose $A_{ij} X_j = X_i B_{ij}$ for all $i, j \in \set I$.
Then the following hold. 

\begin{enumerate}
\item  If $v_i$ and $v_j$ are strongly connected in \set D(\set A), then $range(X_i) $ and $range (X_j) $ have the same 
dimension, i.e.,  $X_i$ and $X_j$ have the same rank. 

\item If $v_i$ and $v_j$ are strongly connected in \set D(\set B), then $ker(X_i)$ and $ker(X_j) $ have the same 
dimension, i.e., $X_i$ and $X_j$ have the same nullity. 

\item If \set D(\set A) is strongly connected, all of the $n_i$'s have the same value,~$n$, and all of the 
$X_i$'s have the same rank. 

\item  If \set D(\set B) is strongly connected, all of the $m_i$'s have the same value,~$m$, and all of the $X_i$'s have the same nullity, $d$. 

\item If $v_i$ and $v_j$ are strongly connected in \set D(\set B), then $X_i$ and $X_j$ have the same rank. 

\item  If \set D(\set B) is strongly connected, all of the $X_i$'s have the same rank.
\end{enumerate}
\end{proposition}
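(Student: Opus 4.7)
The plan is to combine Lemma~\ref{lemmabasicfacts} with Proposition~\ref{subspacedimfact}, which already does all the heavy lifting about dimensions along directed walks, and then use the rank--nullity theorem to cross between kernel and range information for parts~5 and~6.

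The starting observation is that Lemma~\ref{lemmabasicfacts} gives us exactly the invariance hypothesis needed to invoke Proposition~\ref{subspacedimfact}. Specifically, part~2 of Lemma~\ref{lemmabasicfacts} says $A_{ij}(range(X_j)) \subseteq range(X_i)$ for all $i,j$, so $\{range(X_i)\}_{i \in \set I}$ is a family of subspaces of the $\set V_i$'s that is invariant under $\set A$ in the coupled sense. Similarly, part~1 of Lemma~\ref{lemmabasicfacts} says $\{ker(X_i)\}_{i \in \set I}$ is a family of subspaces of the $\set W_i$'s that is invariant under $\set B$ in the coupled sense.

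With this setup, parts 1--4 of the proposition follow immediately. For part~1, if $v_i$ and $v_j$ are strongly connected in $\set D(\set A)$, apply part~2 of Proposition~\ref{subspacedimfact} to the family $\set A$ with subspaces $\set U_k = range(X_k)$; this gives $dim(range(X_i)) = dim(range(X_j))$, which is equality of ranks. For part~2, do the same with $\set B$ and $\set U_k = ker(X_k)$. For parts~3 and~4, when the relevant digraph is strongly connected, part~3 of Proposition~\ref{subspacedimfact} says all $n_i$'s (resp.\ $m_i$'s) coincide and all subspace dimensions in the chosen invariant family coincide, giving the stated common rank (resp.\ common nullity).

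Parts~5 and~6 require bridging from kernels to ranks, and this is where rank--nullity enters. For part~5, suppose $v_i$ and $v_j$ are strongly connected in $\set D(\set B)$. Part~2 of Proposition~\ref{subspacedimfact} applied to $\set B$ gives both $m_i = m_j$ and $dim(ker(X_i)) = dim(ker(X_j))$, so by rank--nullity
\[ rank(X_i) = m_i - dim(ker(X_i)) = m_j - dim(ker(X_j)) = rank(X_j). \]
Part~6 follows the same way from part~4: strong connectedness of $\set D(\set B)$ forces a common $m$ and a common nullity $d$, hence a common rank $m - d$. I do not anticipate a real obstacle here: once the invariance observation from Lemma~\ref{lemmabasicfacts} is noted, everything reduces to bookkeeping with Proposition~\ref{subspacedimfact} and the rank--nullity theorem. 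The only mildly subtle point is remembering that $range(X_i) \subseteq \set V_i$ (so the relevant digraph is $\set D(\set A)$), while $ker(X_i) \subseteq \set W_i$ (so the relevant digraph is $\set D(\set B)$), and that rank information for $X$ can legitimately be extracted from $\set D(\set B)$ by passing through the nullity.
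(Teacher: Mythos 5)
Your proposal is correct and follows essentially the same route as the paper: parts 1--4 by feeding the invariant families $\{range(X_i)\}$ and $\{ker(X_i)\}$ from Lemma~\ref{lemmabasicfacts} into Proposition~\ref{subspacedimfact}, and parts 5--6 by noting that strong connectivity in $\set D(\set B)$ forces $m_i = m_j$ so the rank--nullity theorem converts equal nullities into equal ranks. The only cosmetic difference is that you derive part~6 directly from part~4 rather than as a consequence of part~5, which changes nothing of substance.
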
 

\begin{proof}
The first four parts follow from Lemma~\ref{lemmabasicfacts} and Proposition~\ref{subspacedimfact}. 
For part~5, suppose $v_i$ and $v_j$ are strongly connected in \set D(\set B).  Then $m_i = m_j$, so the matrices $X_i$ and $X_j$ have the same number of 
columns.  From part~2, we know $X_i$ and $X_j$ have the same nullity.  The rank plus nullity theorem then tells us 
$X_i$ and $X_j$ have the same rank.  Part 6 is an immediate consequence of part 5.  
\end{proof}

We now have a version of Schur's lemma for families \set A, \set B when neither is strongly reducible in the coupled sense.  

\begin{theorem}
\label{Schurlemmaversionthree}
Assume neither  $\set A = \{A_{ij} \}_{i, j \in I}$  nor     $\set B = \{B_{ij} \}_{i, j \in I}$  is strongly reducible in the coupled sense.  Assume 
also that \set D(\set A) and \set D(\set B) are strongly connected.  
Let $X_i$ be $n \times m$ for all $i \in \set I$, and suppose 
 $A_{ij} X_j = X_i B_{ij}$ for all $i, j \in \set I$.  
Then either $X_i = 0$ for all $i$, or $X_i$ is nonsingular for all $i$.  In the latter case we must have $m = n$.  
If $\set A = \set B$ and is a family of complex matrices, then there is some scalar $\alpha$ such that $X_i = \alpha I_n$ for all $i$.  
\end{theorem}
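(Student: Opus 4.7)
The plan is to combine Lemma~\ref{lemmabasicfacts} with Proposition~\ref{dimensionfacts} to force the kernels and ranges of the $X_i$'s to have constant dimension, and then use the absence of strong reducibility to rule out the intermediate cases. First I would set $\set U_i = \ker(X_i) \subseteq \set W_i$. By part~1 of Lemma~\ref{lemmabasicfacts}, we have $B_{ij}(\set U_j) \subseteq \set U_i$ for all $i,j$. Since $\set D(\set B)$ is strongly connected, part~4 of Proposition~\ref{dimensionfacts} says that all $X_i$ share a common nullity $d$, so $\dim \set U_i = d$ for every $i$. If $0 < d < m$, then every $\set U_i$ is a nonzero proper subspace of $\set W_i$ that is invariant in the coupled sense, contradicting the hypothesis that $\set B$ is not strongly reducible. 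Therefore $d = 0$ or $d = m$, i.e., either all $X_i$ are injective or all $X_i$ are zero.

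In the latter case we are done, so I would assume each $X_i$ is injective and turn to $\set U_i = \mathrm{range}(X_i) \subseteq \set V_i$. By part~2 of Lemma~\ref{lemmabasicfacts}, $A_{ij}(\set U_j) \subseteq \set U_i$; by part~3 of Proposition~\ref{dimensionfacts}, these subspaces share a common dimension $r$. The same strong reducibility dichotomy applied to $\set A$ forces $r = 0$ or $r = n$; injectivity of $X_i$ rules out $r = 0$ (as $m \ge 1$), so each $X_i$ is surjective as well. Hence each $X_i$ is a nonsingular $n \times m$ matrix, forcing $m = n$.

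For the final claim, assume $\set A = \set B$ consists of complex matrices, and suppose some $X_p \neq 0$. By the above, every $X_p$ is nonsingular, and over $\blabold C$ it has some eigenvalue $\lambda$. Set $Y_i = X_i - \lambda I_n$. Then
\[
A_{ij} Y_j = A_{ij} X_j - \lambda A_{ij} = X_i A_{ij} - \lambda A_{ij} = Y_i A_{ij},
\]
so the family $\{Y_i\}_{i \in \set I}$ satisfies the coupled intertwining relation with the same $\set A = \set B$. The first part of the theorem, applied to $\{Y_i\}$, implies that either all $Y_i$ are zero or all are nonsingular. Since $\lambda$ is an eigenvalue of $X_p$, the matrix $Y_p$ is singular, so every $Y_i$ must vanish, giving $X_i = \lambda I_n$ for all $i$.

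The main obstacle is the very first step: recognizing that the combination of ``all nullities equal'' (from strong connectedness of $\set D(\set B)$) with ``not strongly reducible'' (applied to $\set B$) is exactly what is needed to eliminate the troublesome intermediate case exhibited in the preceding example, where one $X_i$ was neither zero nor nonsingular. Once this observation is in place, the rest of the argument is a direct adaptation of the proof of Theorem~\ref{maintheorem}, with the scalar-matrix conclusion handled by the standard trick of translating by $\lambda I_n$ and reapplying the first part.
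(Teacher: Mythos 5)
Your proof is correct, and its first two paragraphs follow the paper's argument essentially step for step: kernels get a common nullity $d$ from part~1 of Lemma~\ref{lemmabasicfacts} plus Proposition~\ref{dimensionfacts}, the failure of strong reducibility of \set B forces $d=0$ or $d=m$, and the same dichotomy run on the ranges via \set A forces each $X_i$ to be nonsingular with $m=n$ (your extra observation that injectivity rules out $r=0$ is fine; the paper simply folds $r=0$ into the all-zero alternative). Where you genuinely diverge is the scalar conclusion. The paper stays inside the eigenspace framework: it applies part~3 of Lemma~\ref{lemmabasicfacts} to the spaces $\set U_i(\lambda) = \{\vect v \mid X_i \vect v = \lambda \vect v\}$, uses strong connectivity of $\set D(\set A)$ through Proposition~\ref{subspacedimfact} to give these spaces a common dimension $f$, notes $f>0$ since $\lambda$ is an eigenvalue of $X_p$, and concludes $f=n$ from non-strong-reducibility, so $X_i = \lambda I_n$ directly. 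You instead use the classical translation trick: set $Y_i = X_i - \lambda I_n$, check the coupled intertwining relation $A_{ij} Y_j = Y_i A_{ij}$, and reapply the already-established zero-or-nonsingular dichotomy to $\{Y_i\}_{i \in \set I}$; since $Y_p$ is singular, every $Y_i$ vanishes. Both routes are valid, and there is no circularity since the dichotomy does not depend on the scalar statement. Yours buys economy — it needs neither part~3 of Lemma~\ref{lemmabasicfacts} nor the eigenspace-dimension bookkeeping, treating the first half of the theorem as a black box — while the paper's eigenspace argument has the advantage of running in a single pass in exact parallel with Theorem~\ref{maintheorem}, a pattern that is then reused in Theorem~\ref{couplednormalirred} with $S_i S_i^*$ in place of $X_i$, where a translation trick would not transfer as cleanly. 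One typographical nit: ``every $X_p$ is nonsingular'' in your final paragraph should read ``every $X_i$.''
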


\begin{proof}
Note first that since \set D(\set A) and \set D(\set B) are both strongly connected, the~$A_{ij}$'s are all square matrices of the same size, $n$, and 
the $B_{ij}$'s are all square matrices of the same size, $m$.  

By Proposition~\ref{dimensionfacts},  the subspaces $ker(X_i)$, for $i \in \set I$, all have the same 
dimension,~$d$.   Since \set B is not strongly reducible in the coupled sense,  either $d = 0$ or $d = m$.  
If $d = m$, then $X_i = 0$ for all $i$ and we are done. 

Assume then that $d = 0$.  Proposition~\ref{dimensionfacts} tells us the subspaces 
$range(X_i)$ all have the same dimension, $r$.  Since \set A is not strongly reducible in the coupled sense either $r = 0$ or $r = n$.  If $r =0$, then 
$X_i = 0$ for all $j$.  If $r = n$, then, since we also have $d = 0$, the $X_i$'s are nonsingular; we then have~$m = n$.

If $\set A  = \set B$, 
we have $A_{ij} X_j = X_i A_{ij}$ 
for all $i, j$.  Fix $p$ and let $\lambda$ be an eigenvalue of $X_p$ with corresponding eigenspace $\set U_p ({\lambda})$; note the 
subspace $\set U_p ({\lambda})$ is nonzero, because $\lambda$ is an eigenvalue of $X_p$.  From part 3 of Lemma~\ref{lemmabasicfacts}, we have   
 $A_{ij} (\set U_j(\lambda) )  \subseteq \set U_i(\lambda)$ for all $i, j$.  
Since $\set D(\set A)$ is strongly connected, Proposition~\ref{subspacedimfact} tells us 
 the spaces $\set U_i(\lambda)$ all have the same dimension; call it~$f$.  Since $\set U_p ({\lambda})$ is nonzero, we know $f > 0$.
 Hence, since \set A is not strongly reducible in the coupled sense, 
 we must have $f = n$, and  $X_i  = \lambda I_n$ for all~$i$.   
\end{proof}

\begin{remark} Earlier work~\cite{LaJu15} gives a proof, using block matrix computation, for the case where all $A_{ij}$'s and $B_{ij}$'s are assumed to be 
nonsingular.  
\end{remark}

The proof of Theorem~\ref{Schurlemmaversionthree} uses the assumption that both \set D(\set A) and \set D(\set B) are strongly 
connected in two ways:  to establish that $n_i = n$ and $m_i = m$ for all $i$, and to show that the relevant subspaces 
(kernels and ranges of the $X_i$'s) have the same dimension.  We now develop another version of  Theorem~\ref{Schurlemmaversionthree}, in which
we weaken the hypothesis about the graphs, but then need to explicitly assume that $n_i = n$ and $m_i = m$ for all $i$.
The key point for this second version is that $X_i$ and $X_j$ have the same rank whenever $v_i$ and $v_j$ are strongly connected in 
{\em either} of the digraphs \set D(\set A) or \set D(\set B).  

We  use $\set A$ and $\set B$ to define an undirected graph, 
$\set G(\set A, \set B)$,  as follows.  
\begin{definition}
The undirected graph, $\set G(\set A, \set B)$, is the graph on vertices $\{v_i\}_{i \in \set I}$, such that 
 $\{ v_i, v_j\}$ is an (undirected) edge of 
$\set G(\set A, \set B)$ if and only if the vertices $v_i$ and $v_j$ are either strongly connected in $\set D(\set A)$, or in $\set D(\set B)$ (or both).
We call this the {\it linked graph of  $\set A$ and $\set B$}.
\end{definition}

\begin{proposition}
\label{dimensionfactsgen}
Let  $\set A = \{A_{ij} \}_{i, j \in \set I}$   and    $\set B = \{B_{ij} \}_{i, j \in \set I}$.   Let $X_i$ be $n_i \times m_i$  and  
 suppose $A_{ij} X_j = X_i B_{ij}$ for all $i, j \in \set I$.
 If $v_i$ and $v_j$ are connected in \set G(\set A, \set B) then $X_i$ and $X_j$ have the same rank.  
 If \set G(\set A, \set B) is connected, then 
 all of the matrices $X_i$ have the same rank.
\end{proposition}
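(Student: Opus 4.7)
The plan is to reduce the claim directly to Proposition~\ref{dimensionfacts} by walking along paths in the undirected graph $\set G(\set A, \set B)$. The key observation is that the edges of $\set G(\set A, \set B)$ are defined exactly so that each edge corresponds to a pair of vertices for which Proposition~\ref{dimensionfacts} already gives equal rank.

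First I would handle the single edge case. Suppose $\{v_i, v_j\}$ is an edge of $\set G(\set A, \set B)$. By definition, either $v_i$ and $v_j$ are strongly connected in $\set D(\set A)$, in which case part~1 of Proposition~\ref{dimensionfacts} gives $\dim(range(X_i)) = \dim(range(X_j))$, i.e., $X_i$ and $X_j$ have the same rank, or they are strongly connected in $\set D(\set B)$, in which case part~5 of the same proposition gives equal rank. Either way, adjacent vertices in $\set G(\set A, \set B)$ yield $X_i$'s of equal rank.

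Next I would extend this along paths. If $v_i$ and $v_j$ are connected in $\set G(\set A, \set B)$, there is a finite undirected walk $v_i = v_{i_0}, v_{i_1}, \ldots, v_{i_p} = v_j$ such that each consecutive pair $\{v_{i_k}, v_{i_{k+1}}\}$ is an edge. Applying the single-edge result along the walk gives
\[
\mathrm{rank}(X_i) = \mathrm{rank}(X_{i_0}) = \mathrm{rank}(X_{i_1}) = \cdots = \mathrm{rank}(X_{i_p}) = \mathrm{rank}(X_j).
\]
Finally, if $\set G(\set A, \set B)$ is connected, then every pair of vertices is joined by such a walk, so all the $X_i$ have the same rank.

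There is no real obstacle here: the proposition is essentially a bookkeeping corollary of Proposition~\ref{dimensionfacts}, with the linked graph $\set G(\set A, \set B)$ chosen precisely so that ``strongly connected in $\set D(\set A)$ or $\set D(\set B)$'' becomes an edge relation, and connectivity of the undirected graph is transferred to equal rank by a straightforward transitivity argument along a walk.
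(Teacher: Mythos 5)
Your proof is correct and matches the paper's own argument: both walk along a path in $\set G(\set A, \set B)$ and apply parts~1 and~5 of Proposition~\ref{dimensionfacts} at each edge to propagate equality of rank. Your version simply makes explicit the single-edge step that the paper's proof states in one line.
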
 

\begin{proof}
Suppose  $v_i$ and $v_j$ are connected in $\set G(\set A, \set B)$.  Then there is a sequence of vertices, 
$v_i = v_{i_1}, v_{i_2}, v_{i_3}, \ldots, v_{i_{p-1}}, v_{i_p} = v_j$,
 such that $\{v_{i_k}, v_{i_{k+1}}\}$ is an edge of 
$\set G(\set A, \set B)$ for $k = 1, \dots, p-1$.  This means $v_{i_k}$ and $v_{i_{k+1}}$ are either strongly connected in $\set D(\set A)$
or strongly connected in $\set D(\set B)$, or both.  Therefore, $rank(X_{i_k})  = rank(X_{i_{k+1}})$ for $k = 1, \ldots, p-1$, 
and $rank(X_i)  = rank(X_j ) $.  
\end{proof}

If either $\set D(\set A)$ or $\set D(\set B)$ is strongly connected, then $\set G(\set A, \set B)$ will be connected.  
However, $\set G(\set A, \set B)$  can be a connected graph even if neither of the digraphs 
$\set D(\set A)$ or $\set D(\set B)$ is strongly connected.  For example, suppose $K = 3$ and 
\[
A = \pmatrix{0 & A_{12} & 0 \cr A_{21} & 0 & 0 \cr 0 & 0 & 0}  \hskip .5 in 
B = \pmatrix{0 & 0 & B_{13}  \cr 0 & 0 & 0 \cr B_{31}  & 0 & 0},
\]
where $A_{12}, A_{21}, B_{13}$ and $B_{31}$ are all nonsingular.  Neither $\set D(\set A)$ nor $\set D(\set B)$ is connected, 
but $\set G(\set A, \set B)$ is connected. (See Figure~\ref{graphpicture}.)

\begin{figure}
\begin{picture}(200, 50)(-250, 10)
\put(-230, 40){$v_1$}
\put(-165, 40){$v_2$}
\put(-170, 50){\circle*{3}}
\put(-220, 50){\circle*{3}}
\put(-220, 50){\vector(1,0){40}}
\put(-180, 50){\vector(-1,0){25}}
\put (-190, 50){\line(1,0){20}}
\put(-195, 100){\circle*{3}}
\put(-195, 110){$v_3$}
\put(-210, 20){$\set D(\set A)$}
\put(-110, 40){$v_1$}
\put(-45, 40){$v_2$}
\put(-50, 50){\circle*{3}}
\put(-100, 50){\circle*{3}}
\put(-100, 50){\vector(2,3){25}}
\put(-68, 97){\vector(-2,-3){25}}
\put(-68, 97){\circle*{3}}
\put(-75, 110){$v_3$}
\put(-90, 20){$\set D(\set B)$}
\put(10, 40){$v_1$}
\put(65, 40){$v_2$}
\put(70, 50){\circle*{3}}
\put(10, 50){\circle*{3}}
\put(10, 50){\line(2,3){35}}
\put (10, 50){\line(1,0){60}}
\put(45, 104){\circle*{3}}
\put(45, 110){$v_3$}
\put(30, 20){$\set G(\set A, \set B)$}
\end{picture}
\caption{\set D(\set A), \set D(\set B) and \set G(\set A, \set B)}
\label{graphpicture}
 \end{figure}

As an example, suppose $\set I_1$ and $\set I_2$ are nonempty, disjoint subsets of \set I such that $\set I = \set I_1 \cup \set I_2$.  
Partition the vertices of \set G(\set A, \set B) into two sets 
corresponding to $\set I_1$ and $\set I_2$, setting 
\[\set S = \{ v_i \ \vert \ i \in \set I_1 \} \hskip .2 in {\rm and} \hskip .2 in \set T = \{ v_i \ \vert \ i \in \set I_2 \}.\]
Suppose $rank(A_{ij}) < n_j$ and $rank(B_{ij}) < m_j$, whenever $i \in \set I_1$ and $j \in \set I_2$. 
Then neither \set D(\set A) nor  \set D(\set B)  has any directed edges from vertices in $\set S$ to vertices in \set T.
The linked graph   $\set G(\set A, \set B)$  then has 
no edges from vertices in~\set S to vertices in \set T  and hence is not connected.   

Now suppose that, whenever $i \in \set I_1$ and $j \in \set I_2$, we have $rank(A_{ij}) < n_j$ and $rank(B_{ji}) < m_i$, (note the reversal of subscripts on $B_{ji}$).
In this case,  $\set D(\set A)$ has no directed edges from vertices in $\set S$ to vertices in \set T, 
while $\set D(\set B)$ has no directed edges from vertices in \set T to vertices in \set S.  Consequently, if $v \in \set S$ and 
$w \in \set T$, then the pair $v, w$ is not strongly connected in either  \set D(\set A) or \set D(\set B).  Hence, $\set G(\set A, \set B)$ has no edges between vertices in 
\set S and vertices in~\set T; thus $\set G(\set A, \set B)$ is not connected.

The following variation of Theorem~\ref{Schurlemmaversionthree} uses this linked graph, \set G(\set A, \set B).  

\begin{theorem}
\label{Schurlemmaversionfour}
Assume neither  $\set A = \{A_{ij} \}_{i, j \in I}$  nor $\set B = \{B_{ij} \}_{i, j \in I}$  is strongly reducible in the coupled sense.  Assume 
also that $n_i = n$ and $m_i = m$ for all $i$, and that $\set G(\set A, \set B)$ is connected.
Let $X_i$ be $n \times m$, and suppose 
 $A_{ij} X_j = X_i B_{ij}$ for all $i, j \in \set I$.  
Then either $X_i = 0$ for all $i$, or $X_i$ is nonsingular for all $i$.  In the latter case we must have $m = n$.  
If $\set A = \set B$ and is a family of complex matrices, then there is some scalar $\alpha$ such that $X_i = \alpha I_n$ for all $i$.  
\end{theorem}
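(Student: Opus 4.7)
The plan is to mirror the structure of the proof of Theorem~\ref{Schurlemmaversionthree}, but to replace the dimension-equality arguments based on strong connectedness of $\set D(\set A)$ and $\set D(\set B)$ with a single appeal to Proposition~\ref{dimensionfactsgen} applied to $\set G(\set A, \set B)$. Since $n_i = n$ and $m_i = m$ are given in the hypotheses of this version, we no longer need the digraphs for that purpose.

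First I would observe that by Proposition~\ref{dimensionfactsgen}, the connectedness of $\set G(\set A, \set B)$ implies that all the matrices $X_i$ have the same rank $r$. Since every $X_i$ is an $n \times m$ matrix, the rank--nullity theorem then gives that every $\ker(X_i)$ has the same dimension $d = m - r$. By part~1 of Lemma~\ref{lemmabasicfacts}, the subspaces $\{\ker(X_i)\}_{i \in \set I}$ satisfy $B_{ij}(\ker(X_j)) \subseteq \ker(X_i)$ for all $i, j$. If $0 < d < m$, then every $\ker(X_i)$ is a nonzero proper subspace of $\set W_i$, exhibiting strong reducibility of $\set B$, which is excluded. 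Hence either $d = m$, in which case $X_i = 0$ for all $i$ (and we are done), or $d = 0$, in which case every $X_i$ is injective.

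In the latter case, $\text{range}(X_i)$ has dimension $r$ for every $i$, and by part~2 of Lemma~\ref{lemmabasicfacts} the ranges form a coupled invariant family for $\set A$. By the same dichotomy applied to $\set A$ (which is not strongly reducible), either $r = 0$ (so $X_i = 0$ for all $i$, contradicting injectivity unless $m = 0$) or $r = n$. Combining injectivity with $r = n$ forces $m = n$ and every $X_i$ to be nonsingular.

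Finally, for the case $\set A = \set B$ over $\blabold C$, assume some $X_p \neq 0$, so $X_p$ has an eigenvalue $\lambda$. The key trick is to set $Y_i = X_i - \lambda I_n$; a one-line computation using $A_{ij} X_j = X_i A_{ij}$ shows $A_{ij} Y_j = Y_i A_{ij}$. Since $\set G(\set A, \set A)$ coincides with $\set G(\set A, \set B)$ here and is therefore connected, Proposition~\ref{dimensionfactsgen} applied to the family $\{Y_i\}$ gives that all $Y_i$ have the same rank, and hence all eigenspaces $\set U_i(\lambda) = \ker(Y_i)$ have the same dimension $f$. Part~3 of Lemma~\ref{lemmabasicfacts} gives $A_{ij}(\set U_j(\lambda)) \subseteq \set U_i(\lambda)$, so non-strong-reducibility of $\set A$ forces $f = 0$ or $f = n$; since $\set U_p(\lambda) \neq 0$, we must have $f = n$, so $X_i = \lambda I_n$ for every $i$. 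The main conceptual obstacle, and the one novel ingredient beyond Theorem~\ref{Schurlemmaversionthree}, is this shift of attention from $X_i$ to $X_i - \lambda I_n$ so that the linked-graph rank equality can be invoked for the eigenspace step; once that move is identified, everything else is a direct adaptation.
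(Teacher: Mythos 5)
Your proof is correct, and its main body --- the common rank $r$ of the $X_i$'s from Proposition~\ref{dimensionfactsgen}, the common nullity via rank-plus-nullity, and the two dichotomies forced by non-strong-reducibility of $\set B$ and then of $\set A$ --- is exactly the paper's argument, which simply refers back to the proof of Theorem~\ref{Schurlemmaversionthree} for those steps. Where you genuinely diverge is the scalar step for $\set A = \set B$. The paper reuses the eigenspace argument of Theorem~\ref{Schurlemmaversionthree}, which rests on Proposition~\ref{subspacedimfact} and therefore needs $\set D(\set A)$ to be strongly connected; since the present theorem only assumes $\set G(\set A, \set B)$ connected, that reuse tacitly relies on the observation that when $\set A = \set B$, connectedness of $\set G(\set A, \set A)$ forces $\set D(\set A)$ to be strongly connected (strong connectivity of vertex pairs is transitive, so a chain of strongly connected consecutive pairs collapses to a single strong component). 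Your device $Y_i = X_i - \lambda I_n$, which satisfies $A_{ij} Y_j = Y_i A_{ij}$ and is thus itself an intertwining family to which Proposition~\ref{dimensionfactsgen} applies, yields the equal eigenspace dimensions $\dim \set U_i(\lambda) = n - \mathrm{rank}(Y_i)$ directly from connectedness of the linked graph, sidestepping that transitivity observation entirely. Both routes are sound: the paper's is shorter given Theorem~\ref{Schurlemmaversionthree} in hand, while yours is more self-contained and makes the dependence on the linked-graph hypothesis explicit. One cosmetic remark: in your range dichotomy, the subcase $r = 0$ under $d = 0$ forces $m = 0$, excluded since the $m_i$ are positive integers; alternatively $r = 0$ gives $X_i = 0$ for all $i$, which is one of the allowed conclusions, so no appeal to a contradiction with injectivity is needed.
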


\begin{proof}
By Proposition~\ref{dimensionfactsgen}, the subspaces $range(X_i)$, for $i$ in \set I, all have the same dimension,~$r$.
Since all of the $X_i$'s have the same number of columns, the rank plus nullity theorem tells us the 
subspaces $ker(X_i)$, for $i \in \set I$, must also all have the same 
dimension,~$d$.
The remainder of the proof is the same as that for Theorem~\ref{Schurlemmaversionthree}.
\end{proof}

Comparing Theorems~\ref{maintheorem},~\ref{Schurlemmaversionthree}, and~\ref{Schurlemmaversionfour}, 
the simplest version is part~1 of Theorem~\ref{maintheorem}.  It is the closest to the usual Schur's Lemma.  However,  
the hypothesis that \set A, \set B be irreducible in the coupled sense is more restrictive than the hypothesis 
of part~2 of Theorem~\ref{maintheorem}. 
The conclusion of part~2 has more options for the~$X_i$'s than part~1.
Theorems~\ref{Schurlemmaversionthree} and~\ref{Schurlemmaversionfour} apply to the larger class of pairs,~\set A,~\set B, which are not strongly reducible
in the coupled sense, 
but have additional restrictions about the connectivity of the graphs \set D(\set A), \set D(\set B), and  
\set G(\set A, \set B) and the equality of the $n_i$'s and $m_i$'s.


\section{Normality and coupled normality}
\label{normalcoupled}

We now consider a refinement of Schur's Lemma for irreducible sets of normal matrices.  This is closely related to Lemma A.4 of~\cite{MKKK10}.  We 
obtain corresponding results for sets \set A, \set B satisfying
 a ``coupled normality" condition.  
  For this section we work over the  field of complex numbers.  
We use * to denote the transpose conjugate of a matrix.  If \set U is a subspace of \set V, we use $\set U^{\perp}$ for the orthogonal 
complement of $\set U$.  We will need the following facts.

\begin{proposition}
\label{propnormalfacts}
Let $A$ be a normal matrix; let $S$ be nonsingular and let $B = S^{-1}AS$.  Then the following are equivalent.

\begin{enumerate}

\item The matrix $B$ is normal. 

\item $S^{-1}A^* S = B^*. $

\item The matrix $SS^*$ commutes with $A$. 

\item The matrix $SS^*$ commutes with $A^*$.  

\item The matrix $S^*S$ commutes with $B$. 

\item The matrix $S^*S$ commutes with $B^*$.

\end{enumerate}

\end{proposition}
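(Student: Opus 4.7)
The plan is to prove the six conditions equivalent by first showing $(2) \Leftrightarrow (3) \Leftrightarrow (4) \Leftrightarrow (5) \Leftrightarrow (6)$ via purely algebraic manipulation using $B = S^{-1}AS$ and $B^* = S^*A^*(S^*)^{-1}$, then closing the loop with the short implication $(2) \Rightarrow (1)$, and finally with the more delicate $(1) \Rightarrow (3)$.

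For the algebraic block I would first establish $(2) \Leftrightarrow (4)$: substituting $B^* = S^*A^*(S^*)^{-1}$ into $S^{-1}A^*S = B^*$ and multiplying on the left by $S$ and on the right by $S^*$ to clear inverses yields $A^*SS^* = SS^*A^*$. The equivalence $(3) \Leftrightarrow (4)$ is then immediate by taking adjoints, since $SS^*$ is Hermitian. For $(3) \Leftrightarrow (5)$, expand $S^*SB - BS^*S = S^*AS - S^{-1}ASS^*S$ and then multiply by $S$ on the left and by $S^{-1}$ on the right to obtain $SS^*A - ASS^*$; so the two commutators $[S^*S,B]$ and $[SS^*,A]$ vanish together. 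Finally $(5) \Leftrightarrow (6)$ is another adjoint argument, using $(S^*S)^* = S^*S$.

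The implication $(2) \Rightarrow (1)$ is a direct calculation: under $(2)$, $BB^* = (S^{-1}AS)(S^{-1}A^*S) = S^{-1}AA^*S$ and similarly $B^*B = S^{-1}A^*AS$, so the normality of $A$ transfers to $B$.

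The main obstacle is $(1) \Rightarrow (3)$, since no purely formal manipulation seems to extract $[SS^*,A]=0$ from $[B,B^*]=0$. My plan is to invoke the spectral theorem. Because $A$ and $B$ are both normal and similar, they have the same spectrum with multiplicities, so we can write $A = UDU^*$ and $B = VDV^*$ with $U$, $V$ unitary and a common diagonal $D$ (after permuting basis vectors to align eigenvalues). Setting $T = U^*SV$, the relation $B = S^{-1}AS$ rearranges to $T^{-1}DT = D$, i.e.\ $T$ commutes with $D$. Commutation with a diagonal matrix $D = \mathrm{diag}(\lambda_1,\ldots,\lambda_n)$ is equivalent to $T_{ij} = 0$ whenever $\lambda_i \neq \lambda_j$, a condition manifestly preserved under conjugate transpose; hence $T^*$, and therefore $TT^*$, also commute with $D$. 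Since $S = UTV^*$ gives $SS^* = UTT^*U^*$, we conclude that $SS^*$ commutes with $UDU^* = A$, which is $(3)$.
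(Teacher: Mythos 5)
Your proof is correct, and the routine equivalences $(2)\Leftrightarrow(3)\Leftrightarrow(4)\Leftrightarrow(5)\Leftrightarrow(6)$ together with $(2)\Rightarrow(1)$ match the paper's computations essentially line for line (the paper chains $2\Leftrightarrow 3\Leftrightarrow 4$ from $B=S^{-1}AS$ and $2\Leftrightarrow 5\Leftrightarrow 6$ from $A=SBS^{-1}$, while you route $(3)\Leftrightarrow(5)$ through the identity $S\,[S^*S,B]\,S^{-1}=[SS^*,A]$ --- a harmless reorganization). Where you genuinely diverge is the hard implication out of $(1)$. The paper proves $(1)\Rightarrow(2)$ by the interpolation trick: choose a polynomial $p$ with $p(\lambda_i)=\overline{\lambda}_i$ on the common spectrum, so that normality gives $A^*=p(A)$ and $B^*=p(B)$, whence $B^*=p(S^{-1}AS)=S^{-1}p(A)S=S^{-1}A^*S$. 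You instead prove $(1)\Rightarrow(3)$ structurally: unitarily diagonalize both matrices against a common $D$, observe that $T=U^*SV$ commutes with $D$, and note that the zero pattern characterizing the commutant of $D$ (namely $T_{ij}=0$ whenever $\lambda_i\neq\lambda_j$) is preserved under conjugate transpose, so $TT^*$ commutes with $D$ and $SS^*=UTT^*U^*$ commutes with $A$. Both arguments rest on the spectral theorem and on the same underlying fact --- the commutant of a normal matrix is $*$-closed --- but your version makes the block structure of that commutant explicit and visible, while the paper's polynomial identity is more compact and is reused verbatim in its subsequent remark connecting Theorem~\ref{theoremirrednormal} to $*$-algebras (where ``$N^*$ is a polynomial in $N$'' is exactly the tool invoked); your eigenvalue-alignment step (``after permuting basis vectors'') is legitimate precisely because similar normal matrices share eigenvalues with multiplicities, and you correctly flag it rather than glossing over it.
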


\begin{proof}
The equivalence of 2, 3 and 4 is easily shown.  Using $B = S^{-1}AS$, 
\begin{eqnarray}
S^{-1} A^* S = B^* &  \iff  &  S^{-1}A^* S = S^* A^* S^{-*} \cr  
& \iff & A^* SS^* =   SS^* A^*  \cr 
& \iff &  SS^* A =   A SS^* , \nonumber
\end{eqnarray} 
where the third line comes from taking the transpose conjugate of the equation in the second line.   
A similar calculation,  starting with $A = S B S^{-1}$, shows  2, 5 and 6 are equivalent: 
\begin{eqnarray}
S B^* S^{-1}  = A^* &  \iff  &  S B^* S^{-1} = S^{-*} B^* S^* \cr 
& \iff & S^* S B^*=   B^* S^* S \cr 
& \iff &  B S^*S =    S^*S B . \nonumber
\end{eqnarray} 
The fact that 2 implies 1 is also easy.  If $S^{-1} A^* S = B^* $,  use $AA^* = A^*A$ to get
\[
B B^*  =   (S^{-1} AS) (S^{-1} A^* S) = (S^{-1} A^*S)(S^{-1} AS) = B^* B.
\]

The only part needing any work at all is to show  1 implies 2.  Let $\scalarlist{\lambda}{n}$ be the eigenvalues of $A$ and let 
$D$ be the diagonal matrix with diagonal entries $\scalarlist{\lambda}{n}$.  Since $A$ is normal,  $A = U^*DU$ for some 
unitary matrix~$U$, and $A^* = U^* \overline{D} U$, where the bar denotes complex conjugation.  Note 
$\scalarlist{\overline{\lambda}}{n}$ are the eigenvalues of $A^*$.  Let $p(x)$ be a polynomial such that 
$p(\lambda_i) = \overline{\lambda}_i$ for each eigenvalue $\lambda_i$.  Then $\overline{D} = p(D)$, and
\[A^* = U^* p(D)U = p(U^* DU) = p(A).\]
Since $B$ is similar to $A$, the matrix~$B$ also has eigenvalues 
$\scalarlist{\lambda}{n}$ and~$B^*$ has eigenvalues $\scalarlist{\overline{\lambda}}{n}$. If $B$ is normal, $B = V^*DV$ for some unitary matrix $V$. 
Hence, 
\[B^* = V^*\overline{D} V = V^*p(D)V = p(V^*DV) = p(B).\]
 But 
$p(B) = p(S^{-1}AS) = S^{-1} p(A) S = S^{-1} A^* S$, so $B^* = S^{-1} A^* S$.
\end{proof}  
%
%

%
This gives an easy proof of the following.  
\begin{theorem}
\label{theoremirrednormal}
Suppose  $\{A_i\}_{i \in \set I}$  and $\{B_i\}_{i \in \set I}$ are  irreducible families of normal matrices, and $S$ is a nonsingular matrix such that 
$S^{-1} A_iS = B_i$ for all~$i \in \set I$.   Then $S$ is a scalar multiple of a unitary matrix. 
\end{theorem}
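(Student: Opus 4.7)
The plan is to combine Proposition~\ref{propnormalfacts} with the earlier proposition about commutants of irreducible sets. Since each $B_i$ is normal and $B_i = S^{-1} A_i S$, the equivalence in Proposition~\ref{propnormalfacts} (condition 1 implies condition 3) gives that $SS^*$ commutes with $A_i$ for every $i \in \set I$. Thus $SS^*$ is a matrix that commutes with every member of the irreducible family $\{A_i\}_{i \in \set I}$.

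By the proposition in Section~\ref{sectionreduSchur} characterizing commutants of irreducible sets of complex matrices, $SS^*$ must be a scalar matrix, say $SS^* = \alpha I$ for some $\alpha \in \blabold C$. The matrix $SS^*$ is positive definite (because $S$ is nonsingular and $SS^*$ is Hermitian positive semidefinite with trivial kernel), so $\alpha$ is a positive real number. Writing $\beta = \sqrt{\alpha}$, the matrix $U = S/\beta$ satisfies $UU^* = I$, hence is unitary, and $S = \beta U$ is a scalar multiple of a unitary matrix.

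The argument is almost immediate once Proposition~\ref{propnormalfacts} is in hand, so there is no real obstacle. The only point that requires a brief check is that the scalar $\alpha$ is a positive real, which follows from the fact that $SS^*$ is a positive definite Hermitian matrix whose only eigenvalue must be $\alpha$. One could alternatively phrase the argument symmetrically using condition~5 of Proposition~\ref{propnormalfacts}, concluding that $S^*S$ is a scalar multiple of the identity by applying the commutant result to the irreducible family $\{B_i\}_{i \in \set I}$; either direction works.
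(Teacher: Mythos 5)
Your proof is correct and is essentially identical to the paper's own argument: apply Proposition~\ref{propnormalfacts} to get that $SS^*$ commutes with each $A_i$, invoke the commutant result for irreducible families to conclude $SS^*=\alpha I$ with $\alpha>0$ by positive definiteness, and normalize to obtain $S=\sqrt{\alpha}\,U$ with $U$ unitary. Nothing to add.
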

\begin{proof}
By the preceding proposition, $SS^*$ commutes with each $A_i$.  Since $\{A_i\}_{i \in \set I}$ is an irreducible family, $SS^*$ must be a scalar matrix.  
Since $S$ is nonsingular, the Hermitian matrix $SS^*$ is positive definite; hence $SS^* = \alpha I$ where $\alpha$ is a positive real number. 
Set $U = {1 \over {\sqrt \alpha}} S$.  Then $ U U^* = { 1 \over {\alpha}} SS^* = I$.  So $U$ is unitary and $S = \sqrt {\alpha} U$.  
\end{proof}
\begin{remark}
This argument is essentially the proof of 
Lemma A.4 of~\cite{MKKK10}, which says that if two irreducible representations of a
$*$-algebra of square matrices are equivalent, then they are similar via a unitary similarity.  Let \set S be the algebra generated by $\{A_i\}_{i \in \set I}$ and let 
\set T be the algebra generated by $\{B_i\}_{i \in \set I}$. For any normal matrix, $N$, the matrix $N^*$ is a polynomial in $N$, so the algebras \set S and 
\set T are $*$-algebras, (which means that whenever $A$ is in the algebra, so is $A^*$).  Let $S$ be a nonsingular matrix such that 
$S^{-1} A_iS = B_i$ for all $i$.  Proposition~\ref{propnormalfacts} tells us $S^{-1} A_i^*S = B_i^*$ for all $i$, so $S$ may be extended to an 
isomorphism of the $*$-algebras \set S and \set T in the usual way.  
\end{remark}

We now introduce the idea of coupled normality. 

\begin{definition}
The family $\set A = \{ A_{ij}\}_{i, j \in \set I}$ is {\it normal in the coupled sense} if  for all $i, j \in \set I$
we have
$A_{ij}^* A_{ij}  = A_{ji} A_{ji}^*$.    
\end{definition}

If \set A is normal in the coupled sense, setting $i=j$ gives $A_{ii}^*A_{ii} = A_{ii} A_{ii}^*$, so~$A_{ii}$ is 
normal for all $i$.   Note also that if $A_{ji} = A_{ij}^*$ for all $i, j$, then \set A is coupled normal.  When $\set I = \{1, \ldots, K\}$, the condition
$A_{ji} = A_{ij}^*$ for all $i, j$ holds when $A$ is a Hermitian matrix.  In the JISA model, $A$ is a covariance matrix, and hence is a real, symmetric 
matrix, so it is Hermitian.  

Recall that, for any matrix $G$, the four matrices $G, G^*, GG^*$, and $G^*G$ all have the same rank.  Hence, when \set A is 
normal in the coupled sense, the matrices $A_{ij}$ and $A_{ji}$ have the same rank.  In particular, note that $A_{ij}$ is nonsingular if and only 
if  $A_{ji}$ is nonsingular.

Let $C$ be a $q \times p$  matrix, let  $D$ be a $p \times q$ matrix, and let $M$ be the $(p+q) \times (p+q)$ matrix 
\[M = \pmatrix{ 0 & D \cr C & 0},\]
where the zero blocks are $p \times p$ and $q \times q$.  Then
\[MM^*  =  \pmatrix{ DD^* & 0 \cr  0 & C C^*} \hskip .2 in  {\rm and} \hskip .2 in
M^*M  =  \pmatrix{ C^*C & 0 \cr  0 & D^* D}.\]
Hence, $M$ is normal if and only if $C^*C = DD^*$ and $D^* D = CC^* $.  The connection with coupled normality is this:   if we set 
$M_{ij}  =  \pmatrix{0 & A_{ij} \cr A_{ji} & 0}$, 
then $\set A = \{ A_{ij}\}_{i\in \set I}$ is normal in the coupled sense if and only if $M_{ij}$ is normal for all $i, j \in \set I$.  

Suppose \set A is normal in the coupled sense and the subspaces $\{ \set U_i\}_{i \in \set I}$ satisfy
 $A_{ij} (\set U_j )\subseteq \set U_i$ for all $i, j$.   Let $d_i$ be the dimension of $\set U_i$. 
 We use the fact that $M_{ij}$ is normal to show that 
$A_{ij} (\set U_j^{\perp})  \subseteq \set U_i^{\perp} $ for all~$i, j$.    

\begin{proposition}
\label{propcouplednormal}
Let $C, D$  be matrices of sizes $q \times p$ and $p \times q$, respectively, such that $C^*C = DD^*$ and $D^* D = CC^*$. 
Suppose there are subspaces  $\set U$ of~$\blabold C^p$, and \set W of $\blabold C^q$, such that 
$C(\set U) \subseteq \set W$ and $D(\set W) \subseteq \set U$.  Then $C(\set U^{\perp}) \subseteq \set W^{\perp}$ and 
$D(\set W^{\perp}) \subseteq \set U^{\perp}$. 
\end{proposition}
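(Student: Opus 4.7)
The plan is to reduce the proposition to the standard fact that every invariant subspace of a normal operator is a reducing subspace, i.e., its orthogonal complement is also invariant. The vehicle is already suggested in the paragraph preceding the statement: form the block matrix
\[
M = \pmatrix{0 & D \cr C & 0}
\]
acting on $\blabold C^p \oplus \blabold C^q$. As noted there, the computations of $MM^*$ and $M^*M$ show that $M$ is normal precisely when $C^*C = DD^*$ and $D^*D = CC^*$, both of which are given.

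Next I would translate the two invariance hypotheses into a single statement about $M$. Writing vectors in $\blabold C^p \oplus \blabold C^q$ as pairs $(\vect u, \vect w)$, we have $M(\vect u, \vect w) = (D\vect w, C\vect u)$, so $C(\set U)\subseteq \set W$ and $D(\set W)\subseteq \set U$ together amount to saying $\set S := \set U \oplus \set W$ is $M$-invariant.

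The key step is then to invoke the reducing-subspace property of normal matrices: if $M$ is normal and $\set S$ is invariant under $M$, then $\set S^\perp$ is also invariant under $M$. This is the one point that needs any justification, and it is what I expect to be the main (though modest) obstacle. A short internal argument is to pick an orthonormal basis whose first vectors span $\set S$, write $M$ in block-triangular form $\pmatrix{M_1 & M_2 \cr 0 & M_3}$, and compare the $(1,1)$ blocks of $MM^*$ and $M^*M$; taking traces of the resulting identity forces $\mathrm{tr}(M_2 M_2^*) = 0$, hence $M_2 = 0$, so $M$ is block diagonal in that basis and $\set S^\perp$ is invariant.

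Finally, since $\set S^\perp = \set U^\perp \oplus \set W^\perp$, the invariance $M(\set S^\perp)\subseteq \set S^\perp$ says that for every $\vect u' \in \set U^\perp$ and $\vect w' \in \set W^\perp$, the pair $(D\vect w', C\vect u')$ lies in $\set U^\perp \oplus \set W^\perp$. Taking $\vect u' = 0$ yields $D(\set W^\perp)\subseteq \set U^\perp$, and taking $\vect w' = 0$ yields $C(\set U^\perp)\subseteq \set W^\perp$, which is the desired conclusion.
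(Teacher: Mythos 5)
Your proposal is correct and follows essentially the same route as the paper: form the normal block matrix $M = \pmatrix{0 & D \cr C & 0}$, observe that the hypotheses say $\set U \oplus \set W$ is $M$-invariant, invoke the fact that invariant subspaces of a normal matrix are reducing, and read off the conclusion from the invariance of $\set U^{\perp} \oplus \set W^{\perp}$. The only difference is that you also supply a (correct) trace-argument justification of the reducing-subspace property, which the paper simply takes as known.
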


\begin{proof}
Let $M = \pmatrix{ 0 & D \cr C & 0}$.  For any $\vect x \in \blabold C^p$ and $\vect y  \in \blabold C^q$, 
\[
M \pmatrix{ \vect x \cr \vect y} = \pmatrix{D \vect y \cr C \vect x}.
\]
If $\vect x \in \set U$ and $\vect y \in \set W$, then $D \vect y \in \set U$ and $C \vect x \in \set W$.
So  $\set U \oplus \set W$, (which is a subspace of $\blabold C^p \oplus  \blabold C^q$), 
is invariant under~$M$.  Since $M$ is normal, the orthogonal complement of $\set U \oplus \set W$  in  $\blabold C^p \oplus  \blabold C^q$ must also be 
invariant under~$M$.  Hence, $\set U^{\perp} \oplus \set W^{\perp}$ is invariant under $M$.  This means that, for $\vect x \in \set U^{\perp}$
and $\vect y \in \set W^{\perp}$, we have $D \vect y \in \set U^{\perp}$ and $C \vect x \in \set W^{\perp}$.  
So $C(\set U^{\perp}) \subseteq \set W^{\perp}$ and $D(\set W^{\perp}) \subseteq \set U^{\perp}$.
\end{proof}
Apply Proposition~\ref{propcouplednormal} to the normal matrix $M_{ij}  =  \pmatrix{0 & A_{ij} \cr A_{ji} & 0}$, to get 
$A_{ij} (\set U_j ^{\perp} ) \subseteq \set U_i^{\perp}$ for all $i, j$. 
Hence, if \set A is normal in the coupled sense, and is reducible in the coupled sense, then it is fully reducible in the coupled sense,
because we can form $T_j$ using a basis for $\set U_j$ for the first $d_j$ columns and a basis for $\set U_j^{\perp}$ for the remaining $n -d_j$ columns.
If we use orthonormal bases for $\set U_j$ and $\set U_j^{\perp}$, then $T_j$ will be unitary. 
Hence \set A is fully reducible in the coupled sense with a coupled unitary similarity.

We will give three versions  of Theorem~\ref{theoremirrednormal} for  \set A, \set B which are normal in the coupled sense, corresponding to the three 
types of reducibility.    
The proofs depend on the following proposition.  The first two statements are a  ``coupled" version of Proposition~\ref{propnormalfacts}.  Part~4 uses the digraphs 
\set D(\set A) and~\set D(\set B).

\begin{proposition}
\label{propcouplednormalfacts}
Assume the families  $\set A = \{A_{ij} \}_{i, j \in \set I}$   and    $\set B = \{B_{ij} \}_{i, j \in \set I}$,   where $A_{ij}$ and $B_{ij}$ are 
complex matrices,  are normal in the coupled sense.
Suppose 
 $A_{ij} S_j = S_i B_{ij}$ for all $i, j$, where $S_i$ is $n_i \times m_i$.    
 For any $i \in \set I$, and any scalar~$\alpha$, define 
 \[\set U_i(\alpha) = \{ \vect v \ \big| \ S_i S_i^* \vect v = \alpha \vect v \} \hskip .15 in {\rm and}  \hskip .15 in
  \set Y_i(\alpha) = \{ \vect w \ \big| \ S_i^* S_i \vect w = \alpha \vect w \}.\]
  Then the following hold. 
 \begin{enumerate}
 \item If $S_i$ is nonsingular then $S_i S_i^*$ commutes with~$A_{ii}$, and $S_i^* S_i$ commutes with~$B_{ii}$. 
 \item
 If $S_i$ and $S_j$ are both nonsingular,
 \[S_i S_i^* A_{ij}  =   A_{ij} S_jS_j^* \hskip .2 in   {\rm and}  \hskip .2 in
 S_i^*S_i B_{ij}  =  B_{ij} S_j^*S_j. \]
 \item
 If $S_i$ and $S_j$ are both nonsingular, 
 \[A_{ij} (\set U_j (\alpha))  \subseteq   \set U_i (\alpha) \hskip .2 in {\rm and} \hskip .2 in  
B_{ij}(\set Y_j (\alpha))  \subseteq   \set Y_i (\alpha). \]
 If $A_{ij}$ is also nonsingular, then $ dim ( \set U_i(\alpha)) =  dim ( \set U_j(\alpha))$.  
 
 If $B_{ij}$ is also nonsingular, then $ dim ( \set Y_i(\alpha)) =  dim ( \set Y_j(\alpha))$.  
 \item  Assume $S_i$ is nonsingular for all $i \in \set I$.  Then the following hold. 
 
 If $v_i$ and $v_j$ are strongly connected in \set D(\set A), then $\set U_i (\alpha)$ and $\set U_j (\alpha)$ have the same dimension. 
  
If $v_i$ and $v_j$ are strongly connected in \set D(\set B), then $\set Y_i (\alpha)$ and $\set Y_j (\alpha)$ have the same dimension.  
 \item If $\alpha \neq 0$ then $\set U_i (\alpha)$ and $\set Y_i (\alpha)$ have the same dimension. 
 \item Assume $S_i$ is nonsingular for all $i \in \set I$.  Then if $v_i$ and $v_j$ are connected in \set G(\set A, \set B), and $\alpha \neq 0$, 
 we have $dim ( \set U_i(\alpha)) =  dim ( \set U_j(\alpha))$. 
\end{enumerate}
\end{proposition}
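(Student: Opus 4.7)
The plan is to leverage Proposition \ref{propnormalfacts} by repackaging the coupled data into block matrices that are normal in the ordinary sense. For a fixed pair $i, j$, set
\[
M_{ij} = \pmatrix{0 & A_{ij} \cr A_{ji} & 0}, \qquad
N_{ij} = \pmatrix{0 & B_{ij} \cr B_{ji} & 0}, \qquad
T_{ij} = \pmatrix{S_i & 0 \cr 0 & S_j}.
\]
A direct block computation using coupled normality shows $M_{ij} M_{ij}^* = M_{ij}^* M_{ij}$, and similarly for $N_{ij}$, so both block matrices are normal. The two coupling equations $A_{ij} S_j = S_i B_{ij}$ and $A_{ji} S_i = S_j B_{ji}$ combine into the single equation $M_{ij} T_{ij} = T_{ij} N_{ij}$. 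When $S_i$ and $S_j$ are both nonsingular, $T_{ij}$ is nonsingular, and Proposition \ref{propnormalfacts} then forces $T_{ij} T_{ij}^* = S_i S_i^* \oplus S_j S_j^*$ to commute with $M_{ij}$ and $T_{ij}^* T_{ij} = S_i^* S_i \oplus S_j^* S_j$ to commute with $N_{ij}$. Reading off the off-diagonal blocks of these two commutator equations yields part 2. Part 1 is the special case $j = i$: the coupling reduces to $A_{ii} S_i = S_i B_{ii}$; both $A_{ii}$ and $B_{ii}$ are normal by setting $i = j$ in the coupled normality conditions, and Proposition \ref{propnormalfacts} applied to this similarity gives the two claimed commutators.

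Part 3 follows directly from part 2. If $\vect v \in \set U_j(\alpha)$, then $S_i S_i^*(A_{ij} \vect v) = A_{ij} S_j S_j^* \vect v = \alpha A_{ij} \vect v$, so $A_{ij}\vect v \in \set U_i(\alpha)$; the \set Y case is symmetric. For the dimension statement, recall the observation preceding the proposition that coupled normality forces $A_{ij}$ and $A_{ji}$ to have the same rank, so $A_{ij}$ nonsingular implies $A_{ji}$ nonsingular. Then $A_{ij}$ restricts to an injection $\set U_j(\alpha) \to \set U_i(\alpha)$ and $A_{ji}$ to an injection in the reverse direction, yielding equality of dimensions; the same argument handles \set Y. Part 4 is part 3 chained along a walk: if $v_i$ and $v_j$ are strongly connected in $\set D(\set A)$, Proposition \ref{subspacedimfact} shows $n_{i_k} = n_{i_{k+1}}$ at every step of a walk between them inside their strong component, so each matrix $A_{i_k, i_{k+1}}$ of full column rank along the walk is square and hence nonsingular. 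Applying part 3 to each edge transfers $\dim \set U(\alpha)$ down the walk; the argument for \set Y using $\set D(\set B)$ is identical.

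Part 5 exhibits mutually inverse linear maps between $\set U_i(\alpha)$ and $\set Y_i(\alpha)$ for $\alpha \neq 0$: the map $\vect v \mapsto S_i^* \vect v$ sends $\set U_i(\alpha)$ into $\set Y_i(\alpha)$ because $S_i^* S_i (S_i^* \vect v) = S_i^* (S_i S_i^* \vect v) = \alpha S_i^* \vect v$, and $\vect w \mapsto S_i \vect w$ sends $\set Y_i(\alpha)$ into $\set U_i(\alpha)$ by the analogous identity. When $\alpha \neq 0$ each map is injective, since for instance $S_i \vect w = 0$ would force $\alpha \vect w = S_i^* S_i \vect w = 0$ and hence $\vect w = 0$, so the two dimensions agree. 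Part 6 then chains parts 4 and 5 along a path in $\set G(\set A, \set B)$: an edge arising from strong connectivity in $\set D(\set A)$ equates $\dim \set U(\alpha)$ directly via part 4, and an edge arising from strong connectivity in $\set D(\set B)$ equates $\dim \set Y(\alpha)$ via part 4 and hence $\dim \set U(\alpha)$ via part 5 (which applies because $\alpha \neq 0$).

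The main obstacle is the verification in part 2: one must recognize that coupled normality is precisely the condition making the $2 \times 2$ block $M_{ij}$ normal, and that the block diagonal $T_{ij}$ packages both coupling equations, for the pair $(i,j)$ and for the pair $(j,i)$, into a single ordinary similarity $N_{ij} = T_{ij}^{-1} M_{ij} T_{ij}$. Once this reduction is in place, every remaining part is a routine application of the eigenspace invariance trick already used in the proofs of Theorems \ref{maintheorem} and \ref{Schurlemmaversionthree}, together with the dimension bookkeeping of Proposition \ref{subspacedimfact}.
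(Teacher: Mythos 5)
Your proposal is correct and takes essentially the same route as the paper's own proof: the identical block matrices $M_{ij} = \pmatrix{0 & A_{ij} \cr A_{ji} & 0}$ and $S = S_i \oplus S_j$ reduce coupled normality to Proposition~\ref{propnormalfacts} for parts 1 and 2, and parts 3--6 use the same eigenspace-invariance, rank, and walk-chaining arguments (with Proposition~\ref{subspacedimfact}) as the paper. The only differences are cosmetic: in part 5 you spell out, via the injections $\vect v \mapsto S_i^*\vect v$ and $\vect w \mapsto S_i \vect w$ (which are inverse only up to the scalar $\alpha$, not literally mutually inverse, though your injectivity argument makes this harmless), the standard fact about the common nonzero eigenvalues of $S_iS_i^*$ and $S_i^*S_i$ that the paper simply cites.
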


\begin{proof}
Suppose $S_i$ is nonsingular.  Since $A_{ii}$ and $B_{ii}$ are both normal, and $S_i^{-1} A_{ii} S_i = B_{ii}$, Proposition~\ref{propnormalfacts} 
tells us that $S_i S_i^*$ commutes with~$A_{ii}$ and $S_i^* S_i$ commutes with~$B_{ii}$.    

Now suppose $i \neq j$, and $S_i$ and $S_j$ are both nonsingular.  
Set 
$M_{ij} =  \pmatrix{0 & A_{ij} \cr A_{ji} & 0}$.  Then 
\begin{eqnarray}
M_{ij} \pmatrix{S_i & 0 \cr 0 & S_j} &  =  & \pmatrix{0 & A_{ij} \cr A_{ji} & 0}   \pmatrix{S_i & 0 \cr 0 & S_j}  \cr \cr
& =  &\pmatrix{0 & A_{ij}S_j  \cr A_{ji}S_i & 0}  \cr \cr
& = & \pmatrix{0 & S_i B_{ij}  \cr S_j  B_{ji}& 0}  \cr \cr 
& = & \pmatrix{S_i & 0 \cr 0 & S_j} \pmatrix{0 & B_{ij} \cr B_{ji} & 0}. \nonumber
\end{eqnarray} 
So, 
\[ 
\pmatrix{S_i & 0 \cr 0 & S_j}^{-1} M_{ij} \pmatrix{S_i & 0 \cr 0 & S_j} = \pmatrix{0 & B_{ij} \cr B_{ji} & 0}.
\]
Since  \set A and \set B are both normal in the coupled sense, $M_{ij}$ and $\pmatrix{0 & B_{ij} \cr B_{ji} & 0}$ are both normal. 
Set $S = \pmatrix{S_i & 0 \cr 0 & S_j}$.  Proposition~\ref{propnormalfacts} tells us that  $SS^*$ commutes with $M_{ij}$.  Hence, 
\[
\pmatrix{S_i S_i^* & 0 \cr 0 & S_j S_j^*}\pmatrix{0 & A_{ij} \cr A_{ji} & 0} = \pmatrix{0 & A_{ij} \cr A_{ji} & 0}\pmatrix{S_i S_i^* & 0 \cr 0 & S_j S_j^*},
\] 
and  $S_i S_i^* A_{ij} = A_{ij} S_jS_j^*$.
Use the fact that $S^*S$ commutes with $\pmatrix{0 & B_{ij} \cr B_{ji} & 0}$ to show 
$S_i^*S_i B_{ij} = B_{ij} S_j^*S_j$
 for all $i, j$.

For part 3, assume $S_i$ and $S_j$ are nonsingular.  
Let $\vect v \in \set U_j(\alpha)$. By part~2,     
$S_i S_i^*  (A_{ij} \vect v) = A_{ij} (S_jS_j^* \vect v) = \alpha (A_{ij} \vect v)$.  This shows  
$A_{ij} (\set U_j(\alpha)) \subseteq \set U_i(\alpha)$.  If~$A_{ij}$ is nonsingular, 
$ dim (A_{ij} (\set U_j(\alpha))) = dim(\set U_j(\alpha))$, so
$ dim ( \set U_j(\alpha)) \leq  dim ( \set U_i(\alpha))$.  Since \set A is coupled normal, $A_{ji}$ is also nonsingular,  
giving the reverse inequality, so 
 ${\rm dim} ( \set U_j(\alpha)) = {\rm dim} ( \set U_i(\alpha))$. 
 The corresponding facts for \set B come from the same argument, 
using $S_i^*S_i B_{ij}  =  B_{ij} S_j^*S_j$.

For part~4, assume $v_i$ and $v_j$ are strongly connected in \set D(\set A).  
Proposition~\ref{subspacedimfact}, together with part~3, gives 
${\rm dim}(\set U_i(\alpha)) = {\rm dim}(\set U_i(\alpha))$.  The same
argument applies when $v_i$ and $v_j$ are strongly connected in \set D(\set B)

Part~5 comes from the fact that $S_j^*S_j$ and $S_j S_j^*$ have the same nonzero eigenvalues with the same multiplicities. 

For part~6, suppose $v_i$ and $v_j$ are connected in \set G(\set A, \set B).    
Then there is a sequence of vertices, 
$v_i = v_{i_1}, v_{i_2}, v_{i_3}, \ldots, v_{i_{p-1}}, v_{i_p} = v_j$,
 such that $\{v_{i_k}, v_{i_{k+1}}\}$ is an edge of 
$\set G(\set A, \set B)$ for $k = 1, \dots, p-1$.  This means $v_{i_k}$ and $v_{i_{k+1}}$ are either strongly connected in $\set D(\set A)$
or strongly connected in $\set D(\set B)$ (or both).  If $v_{i_k}$ and $v_{i_{k+1}}$ are strongly connected in $\set D(\set A)$, then 
$dim(\set U_{i_k} (\alpha))  = dim(\set U_{i_{k+1}} (\alpha))$ by part~4.  If $v_{i_k}$ and $v_{i_{k+1}}$ are strongly connected in $\set D(\set B)$, 
then part~4 tells us $dim(\set Y_{i_k} (\alpha))  = dim(\set Y_{i_{k+1}} (\alpha))$.  But, since $\alpha$ is nonzero, $\set U_i(\alpha)$ and 
$\set Y_i(\alpha)$ have the same dimension.  So, in either case, $dim(\set U_{i_k} (\alpha))  = dim(\set U_{i_{k+1}} (\alpha))$
for $1 \leq k \leq p-1$, and hence $dim(\set U_i (\alpha))  = dim(\set U_j (\alpha))$.   
\end{proof}

With these preliminaries completed, we state and prove a
version of Schur's Lemma for \set A, \set B that are normal in the coupled sense.  
The three cases correspond to the three types of coupled reducibility.  

\begin{theorem} 
\label{couplednormalirred}
Let $\set A = \{A_{ij} \}_{i, j \in \set I}$  and $\set B = \{B_{ij} \}_{i, j \in \set I}$ where $A_{ij}$ is $n_i \times n_j$ and $B_{ij}$
is $m_i \times m_j$.   Assume \set A and~\set B are normal in the coupled sense. Suppose $S_i$ is $n_i \times m_i$ and 
$A_{ij} S_j = S_i B_{ij}$ for all $i, j$.  
\begin{enumerate}

\item If \set A and \set B are both irreducible in the coupled sense, then 
either $S_i = 0$ for all $i$, or there is a scalar $\alpha$ such that every $S_i$ is  $\alpha$ times a unitary matrix;  i.e.,
$S_i = \alpha U_i$, where $U_i$ is unitary.  In the latter case, $m_i = n_i$ for all $i$.   
Furthermore, if $\set A = \set B$, then there is a scalar~$\beta$ such that $S_i = \beta I_{n_i}$ for all $i$. 

\item
If neither \set A nor  \set B is properly reducible  in the coupled sense, then, for each $i$, either $S_i = 0$  or  $S_i$ is a scalar multiple of a unitary matrix.  
In the latter case, $m_i = n_i$.  
Furthermore, if $\set A = \set B$, then every $S_i $ is a scalar matrix.   

\item Suppose neither \set A nor \set B is strongly reducible in the coupled sense.  Assume also that 
$n_i = n$ and $m_i = m$ for all $i \in \set I$, and  that the  graph $\set G(\set A, \set B)$  is 
connected. Then either $S_i = 0$ for all $i$, or there is a scalar~$\alpha$ such that each 
$S_i$ is a~$\alpha$ times a unitary matrix;  i.e.,
$S_i = \alpha U_i$, where~$U_i$ is unitary.  In the latter case we must have $m = n$.  
Furthermore, if $\set A = \set B$, then there is some scalar $\beta$ such that $S_i = \beta I_n$ for all $i$.
\end{enumerate}  
\end{theorem}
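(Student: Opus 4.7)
The plan is to reduce each of the three parts to the corresponding earlier Schur-type statement (Theorem~\ref{maintheorem}(1), Theorem~\ref{maintheorem}(2), or Theorem~\ref{Schurlemmaversionfour}) and then upgrade ``nonsingular $S_i$'' to ``scalar multiple of a unitary'' using coupled normality. The unifying device for the upgrade is that whenever $S_i$ is nonsingular, $S_iS_i^*$ is positive definite; by Proposition~\ref{propcouplednormalfacts}(2), the matrices $S_iS_i^*$ intertwine with $\set A$ as $S_iS_i^*A_{ij}=A_{ij}S_jS_j^*$, so each eigenspace $\set U_j(\alpha)=\{\vect v:S_jS_j^*\vect v=\alpha\vect v\}$ is a candidate for a coupled reducing family for $\set A$.

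For part~1, Theorem~\ref{maintheorem}(1) immediately gives either $S_i=0$ for all $i$ (done) or every $S_i$ nonsingular with $m_i=n_i$. In the latter case, fix any $p$ and a (necessarily positive) eigenvalue $\alpha$ of $S_pS_p^*$. Proposition~\ref{propcouplednormalfacts}(3) yields $A_{ij}(\set U_j(\alpha))\subseteq\set U_i(\alpha)$ for all $i,j$, and $\set U_p(\alpha)\neq\{0\}$. Coupled irreducibility of $\set A$ then forces $\set U_i(\alpha)=\set V_i$ for every $i$, so $S_iS_i^*=\alpha I_{n_i}$ uniformly and $S_i=\sqrt{\alpha}\,U_i$ with $U_i$ unitary. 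If moreover $\set A=\set B$, Theorem~\ref{maintheorem}(1) already supplies the common scalar $\beta$.

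Part~2 is carried out one index at a time. Theorem~\ref{maintheorem}(2) tells us that, for each $i$, $S_i$ is either zero or nonsingular with $m_i=n_i$. The subtle point---and the main obstacle---is that Proposition~\ref{propcouplednormalfacts}(3) only delivers the invariance $A_{kj}(\set U_j(\alpha))\subseteq\set U_k(\alpha)$ when both $S_k$ and $S_j$ are nonsingular. To patch this, observe that when $S_k=0$ and $S_j$ is nonsingular, the coupling equation $A_{kj}S_j=S_kB_{kj}=0$ together with the nonsingularity of $S_j$ forces $A_{kj}=0$, as already noted in the discussion following Theorem~\ref{maintheorem}; and for $\alpha>0$ we have $\set U_k(\alpha)=\{0\}$ whenever $S_k=0$, so the inclusion becomes trivial. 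Thus, for a fixed $i$ with $S_i$ nonsingular and a (positive) eigenvalue $\alpha$ of $S_iS_i^*$, the family $\{\set U_j(\alpha)\}_{j\in\set I}$ satisfies the coupled invariance for $\set A$ globally. Since $\set A$ is not properly reducible and $\set U_i(\alpha)\neq\{0\}$, we conclude $\set U_i(\alpha)=\set V_i$, hence $S_iS_i^*=\alpha I_{n_i}$ and $S_i$ is a scalar multiple of a unitary matrix. The $\set A=\set B$ conclusion is immediate from Theorem~\ref{maintheorem}(2).

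For part~3, Theorem~\ref{Schurlemmaversionfour} gives either all $S_i=0$ or every $S_i$ nonsingular with $m=n$. In the nonsingular case, pick an eigenvalue $\alpha>0$ of $S_pS_p^*$ for some $p$. Proposition~\ref{propcouplednormalfacts}(6), applied to the connected graph $\set G(\set A,\set B)$, then gives $\dim\set U_j(\alpha)=\dim\set U_p(\alpha)>0$ for every $j$, so every $\set U_j(\alpha)$ is nonzero; combined with the invariance from Proposition~\ref{propcouplednormalfacts}(3), the assumption that $\set A$ is not strongly reducible in the coupled sense forces at least one $\set U_j(\alpha)=\set V_j$. By the common dimension, \emph{every} $\set U_j(\alpha)=\set V_j$, so $S_jS_j^*=\alpha I_n$ and $S_j=\sqrt{\alpha}\,U_j$ with $U_j$ unitary for every $j$, with the same scalar $\sqrt{\alpha}$. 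The $\set A=\set B$ statement follows directly from Theorem~\ref{Schurlemmaversionfour}.
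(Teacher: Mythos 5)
Your proposal is correct and follows essentially the same route as the paper's own proof: reduce each part to Theorem~\ref{maintheorem} or Theorem~\ref{Schurlemmaversionfour}, then apply Proposition~\ref{propcouplednormalfacts} to the eigenspaces $\set U_i(\alpha)$ of $S_iS_i^*$ and invoke the relevant irreducibility hypothesis to force $S_iS_i^* = \alpha I$. Even your patching argument in part~2 (handling indices with $S_i = 0$ via $A_{ij} = 0$ and $\set U_i(\alpha) = \{0\}$ for $\alpha \neq 0$) is exactly the case analysis the paper carries out with its set $\set N$ of nonsingular indices.
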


\begin{proof}
The proofs are similar to those of Theorems~\ref{maintheorem} and~\ref{Schurlemmaversionthree}.

Suppose \set A and \set B are both irreducible in the coupled sense.  Part~1 of Theorem~\ref{maintheorem} tells us that, 
either $S_i = 0$ for all $i$, or $S_i$ is nonsingular for all $i$.  In the latter case we must have $m_i = n_i$ for all $i$.
Suppose $S_i$ is nonsingular for all $i$.    Fix $p$ and let $\lambda$ be an eigenvalue of~$S_p S_p^*$.   
Proposition~\ref{propcouplednormalfacts}  gives $A_{ij} (\set U_j(\lambda))\subseteq \set U_i(\lambda)$  for all $i, j$.  
Since \set A is irreducible in the coupled sense, either all of the subspaces $\set U_i(\lambda)$ are zero, or 
$\set U_i(\lambda) = \set V_i$ for all $i \in \set I$.  
Since $\lambda$ is an eigenvalue of $S_pS_p^*$, the space $\set U_p(\lambda)$ is nonzero.  Therefore, 
$\set U_i(\lambda) = \set V_i $ for all $i$,  and $S_i S_i^* = \lambda I_{n_i}$ for all $i$.  Since $S_i S_i^*$ is positive definite, $\lambda$ is a positive real number and 
$U_i = {1 \over {\sqrt {\lambda}}} S_i$ is a unitary matrix.  

For the second version, assume neither \set A nor \set B is properly reducible in the coupled sense.  
From part~2 of Theorem~\ref{maintheorem}, we know that, for each $i$, either $S_i = 0$ or $S_i$ is nonsingular.  If $S_i$
is nonsingular we must have $m_i = n_i$.
Suppose $S_p$ is nonsingular for some $p$.  Let $\lambda_p$ be an eigenvalue of $S_p S_p^*$.  Since~$S_p$ is nonsingular, $\lambda_p \neq 0$.
Let \set N denote the set of all $q$ such that $S_q$ is nonsingular.
Consider the statement
\begin{equation}
\label{testeqn}
 A_{ij} (\set U_j(\lambda_p)) \subseteq \set U_i(\lambda_p).
 \end{equation} 
 If $i, j$ are both in \set N, then $S_i, S_j$ are both nonsingular and Proposition~\ref{propcouplednormalfacts} tells us~(\ref{testeqn}) holds.
 If $j \notin \set N$, then $S_j = 0$, and hence, since $\lambda_p$ is nonzero,  $\set U_j(\lambda_p) = \{0\}$, so~(\ref{testeqn}) holds. 
 Finally, if $i \notin \set N$ but $j \in \set N$, then $S_i = 0$ and $S_j$ is nonsingular.    In this case, $A_{ij} S_j = S_i B_{ij}$ tells us
 $A_{ij}= 0$,  and~(\ref{testeqn}) holds. 
 Hence,~(\ref{testeqn}) holds for all $i, j$.  Since \set A is not strongly reducible in the coupled sense, there are only two possibilities for each 
 $\set U_i(\lambda)$:  it is either zero or the whole space $\set V_i$.  Since $\lambda_p$ is an eigenvalue of $S_p S_p^*$, we know $\set U_p(\lambda_p)$ is nonzero; therefore it 
 must be the whole space and $S_p S_p^* = \lambda_p I_{n_p}$.  Since $S_p S_p^*$ is positive definite, $\lambda_p$ is a positive real number and 
$U_p = {1 \over {\sqrt {\lambda_p}}} S_p$ is a unitary matrix.

Finally, consider the third version, where we assume neither \set A nor~\set B is strongly reducible in the coupled sense and $\set G(\set A, \set B)$  is 
connected. From Theorem~\ref{Schurlemmaversionfour}, either $S_i = 0$ for all $i$, or $S_i$ is nonsingular for all $i$.  In the latter case, $m = n$.

Suppose $S_i$ is nonsingular for all $i$.  
Fix $p$ and let $\lambda$ be an eigenvalue of~$S_pS_p^*$.   Since $S_p$ is nonsingular, $\lambda \neq 0$.
From Proposition~\ref{propcouplednormalfacts}, 
we have 
$A_{ij} (\set U_j(\lambda)) \subseteq \set U_i(\lambda)$, for all $i, j$, 
and the subspaces $\set U_i(\lambda)$ all have the same dimension.
 Let $f$ be the
dimension of these subspaces.  Since $\lambda$ is an eigenvalue of $S_pS_p^*$, the eigenspace $\set U_p(\lambda)$ is nonzero.  
Hence, $f > 0$.  Since \set A is not strongly reducible in the coupled sense we must have $f = n$.  Therefore
$S_i S_i^* = \lambda I_n$ for all $i$, the number $\lambda$ must be a   positive real number and 
$ U_i = {1 \over {\sqrt {\lambda}}} S_i$ is a unitary matrix.  
\end{proof}


\section{Appendix}
\label{appendix}

We construct examples to establish the claims made in Section~\ref{coupledreduc}.  

Let \set I be the index set; let  $\{n_i\}_{i \in \set I}$ be a family of positive integers.  If $n_i = 1$, set $N_i = (0)$.  
If $n_i \geq 2$, let $N_i$ be the  $n_i \times n_i$ matrix 
with a $1$ in each superdiagonal entry and zeroes elsewhere. 
This is the standard nilpotent matrix used in the blocks of the Jordan canonical form. For any $\vect x \in {\field F}^{n_i}$,  
\[
 N_i \vect x = \pmatrix{0 & 1 & 0 & \cdots & 0 \cr
0 & 0 & 1 & \cdots & 0 \cr 
\vdots & \vdots & \vdots & &  \vdots \cr
0 & 0 & 0 & \cdots & 1 \cr 
0 & 0 & 0 & \cdots & 0}   \pmatrix{x_1 \cr x_2  \cr \vdots  \cr x_{n_i-1} \cr x_{n_i}} = \pmatrix{x_2 \cr x_3 \cr \vdots \cr x_{n_i} \cr 0}.
\]
Multiplying \vect x on the left by $N_i$ moves the coordinates up one position and puts a $0$ in the last entry.  
Let $\vect e_j^i$ denote the vector with $n_i$ coordinates that has a $1$ in entry $j$ and zeroes in all other positions.  
Thus, $\vectorlist {e^i}{1}{n_i}$ are the unit coordinate vectors for $\field F^{n_i}$.  
Then $N_i \vect e_j^i = \vect e_{j-1}^i$.  Henceforth, we omit the superscript $i$ on $\vect e_j$, as the number of coordinates will 
be clear from the context.  For example, if we write $A_{ij} \vect v$, then it is understood that \vect v has $n_j$ coordinates.

Here is the key fact used
in the examples. 

\begin{proposition}
\label{Nsubspacefact}
For $n \geq 2$, let  $N$ be the $n \times n$ matrix with a $1$ in each superdiagonal entry and zeroes elsewhere. 
Suppose \set U is a nonzero, proper invariant subspace of $N$.  Then $\vect e_1 \in \set U$ and 
$\vect e_n \notin \set U$.   
\end{proposition}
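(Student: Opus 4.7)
The plan is to exploit the shift action $N\vect e_j = \vect e_{j-1}$ for $j \geq 2$ and $N\vect e_1 = \vect 0$, together with the fact that $\set U$ is closed under $N$.

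First, to show $\vect e_1 \in \set U$, I would pick any nonzero $\vect u \in \set U$ and write $\vect u = c_1 \vect e_1 + c_2 \vect e_2 + \cdots + c_n \vect e_n$. Let $k$ be the largest index with $c_k \neq 0$. Since $N^{k-1}\vect e_j = \vect 0$ for $j < k$ and $N^{k-1}\vect e_k = \vect e_1$, applying $N^{k-1}$ to $\vect u$ yields $c_k \vect e_1$. Invariance gives $N^{k-1}\vect u \in \set U$, and since $c_k \neq 0$ we conclude $\vect e_1 \in \set U$.

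For the second claim, I would argue by contradiction: suppose $\vect e_n \in \set U$. Then by invariance $N\vect e_n = \vect e_{n-1} \in \set U$, and iterating, $N^{k}\vect e_n = \vect e_{n-k} \in \set U$ for $k = 0, 1, \ldots, n-1$. Hence every standard basis vector lies in $\set U$, forcing $\set U = \field F^n$, which contradicts the hypothesis that $\set U$ is a proper subspace.

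There is no real obstacle here; the whole argument is just repeated application of $N$, first upward from a nonzero element of $\set U$ to reach $\vect e_1$, and then downward from $\vect e_n$ to reach every basis vector. The only point deserving a little care is the choice of the \emph{largest} nonzero coefficient in the expansion of $\vect u$, so that the lower-indexed basis vectors in the expansion are killed by $N^{k-1}$ while $\vect e_k$ is carried exactly to $\vect e_1$.
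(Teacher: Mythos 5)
Your proof is correct and follows essentially the same argument as the paper: extract the last nonzero coordinate $x_k$ of a nonzero vector in $\set U$ and apply $N^{k-1}$ to land on a nonzero multiple of $\vect e_1$, then observe that the orbit $N^k \vect e_n = \vect e_{n-k}$ would generate all of $\field F^n$ if $\vect e_n$ belonged to $\set U$. No gaps; this matches the paper's proof of Proposition~\ref{Nsubspacefact}.
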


\begin{proof}
Let \vect x be a nonzero vector in \set U, and let $x_k$
be the last nonzero coordinate of \vect x, i.e., $x_{k+1} = \cdots = x_n = 0$.  Then $N^{k-1} \vect x = x_k \vect e_1$, so $\vect e_1 \in \set U$.

For the second part, note that $N^{n-1} \vect e_n, N^{n-2} \vect e_n, \ldots, N \vect e_n, \vect e_n$ are the unit coordinate vectors 
\vectorlist {e}{1}{n}.  Hence, if $\vect e_n \in \set U$, then \set U is the whole space~\set V.  Since \set U is a proper subspace of \set V, 
the vector $\vect e_n$ cannot be in \set U. 
\end{proof}

\begin{remark}
Let $\set Y_j$ be the $j$-dimensional subspace spanned by  \vectorlist {e}{1}{j}, i.e.,  the 
set of all vectors with zeroes in the last $n - j$ entries.  A similar argument shows that
the nonzero invariant subspaces of~$N$ are the subspaces \scalarlist {\set Y}{n}. 
\end{remark}

We now construct some examples.  

\begin{example}
\label{propernotstrong}
Assume $\vert \set I \vert \geq 2$ and that $n_p \geq 2$ for some $p \in \set I$. 
Define \set A as follows.  
\begin{enumerate}
\item  $A_{ii} = N_i$ for all $i \in \set I$. 
\item  If $j \neq p$, set $A_{pj} = 0$. 
\item  If $i \neq p$ let $A_{ip}$ be any matrix which has $\vect e_{n_i}$ in the first column.  
\item  If $i \neq p$, and $j \neq p$, and $i \neq j$, then $A_{ij}$ can be any $n_i \times n_j$ matrix.  
\end{enumerate}
Set $\set U_i = \set V_i$ for $ i \neq p$, and let $\set U_p$ be the line spanned by $\vect e_1$.  Since $n_p \geq 2$, the 
subspace $\set U_p$ is a nonzero, proper subspace of  $\set V_p$.  
One can easily check that 
the subspaces $\{ \set U_i\}_{i \in \set I}$  properly reduce $\set A $. 
 
We now show  \set A is not strongly reducible in the coupled sense.  Suppose there were nonzero, proper subspaces $\{\set U_i\}_{i \in \set I}$ 
that reduced \set A.  (Note we must then have $n_i \geq 2$ for all $i$.)  
Each $\set U_i$ is a nonzero, proper invariant subspace of $N_i$, so 
$\vect e_1 \in \set U_i$ and $\vect e_{n_i} \notin \set U_i$.   Choose $i \neq p$. Then 
$A_{ip}$  has $\vect e_{n_i}$ in its first column,  so $A_{ip} \vect e_1 = \vect e_{n_i}$.  But $\vect e_1 \in \set U_p$ and 
$\vect e_{n_i} \notin \set U_i$, so $A_{ip}(\set U_p) \not\subseteq \set U_i$.   Hence, we have a contradiction, and \set A is not strongly reducible
in the coupled sense.  
\end{example}

Example~\ref{propernotstrong} shows \set A can be properly reducible in the coupled sense without being strongly reducible.  Thus, 
for  any field \field F,  when $\vert \set I \vert \geq 2$ and $n_i \geq 2$ for at least one $i$, we have   
$StrRed(\field F, \{n_i\}_{i \in \set I}) \subset PropRed(\field F, \{n_i\}_{i \in \set I})$.

The next example shows that if $\vert \set I \vert \geq 4$, and $n_i \geq 2$ for at least one value of $i$,  we have 
$PropRed(\field F, \{n_i\}_{i \in \set I}) \subset Red(\field F, \{n_i\}_{i \in \set I})$.
 
 \begin{example}
 \label{examplerednotproperred}
 Assume $\vert \set I \vert \geq 4$ and that $n_p \geq 2$ for some $p \in \set I$.   Choose any $q \in \set I$, with $q \neq p$, and 
 define \set A as follows.  
 \begin{enumerate}
 \item $A_{ii} = N_i$ for all $i \in \set I$. 
 \item For all $i$ with $i \neq p$ and $i \neq q$, set  $A_{ip} = 0$ and $A_{iq} = 0$.
 \item For all other choices of $i, j$ with $i \neq j$, let $A_{ij}$ be any matrix with $\vect e_{n_1}$ in the first column.
  \end{enumerate}
 
 We illustrate for $\set I = \{1, 2, \ldots, K\}$, with $p = 1$ and $q = 2$.  
 \[
 A = \pmatrix{ N_1 & * & \Big| & * & * & \cdots & * \cr 
 * & N_2 & \Big| & * &* & \cdots & * \cr 
 \hline \cr
0 & 0 & \Big| & N_3 & *  & \cdots & * \cr
0 &0 & \Big| &* & N_4   & \cdots & * \cr
\vdots & \vdots & \Bigg|  & \vdots  & \vdots  & \ddots &  \vdots \cr 
0 & 0 & \Big|  & * & * & \cdots & N_K },
\]
where each asterisk ($*$)  represents an $n_i \times n_j$ matrix with $\vect e_{n_i}$ in the first column.

Set $\set U_p = \set V_p$, and $\set U_q = \set V_q$.  For all other values of $i$,   set $\set U_i = 0$.
One can check that \set A is coupled reducible via $\{ \set U_i\}_{i \in \set I}$.  

We now show \set A is not properly reducible.  Suppose \set A could be properly reduced by subspaces $\{ \set U_i\}_{i \in \set I}$.
At least one  $\set U_i$ must be a nonzero, proper subspace; we first show this holds for at most one value of $i$.  
Suppose $\set U_i$ and~$\set U_j$  were both nonzero, proper subspaces, with $i \neq j$.  We must then have $n_i \geq 2$ and $n_j \geq 2$.  
Since 
$\set U_i$ is a nonzero, proper invariant subspace of~$N_i$, and~$\set U_j$ is a nonzero, proper invariant subspace of~$N_j $, we know $\vect e_1$ is in 
both~$\set U_i$ and $\set U_j$, and $\vect e_{n_i} \notin \set U_i$ and $\vect e_{n_j} \notin \set U_j$.
If $i = p$ and $j = q$, use the matrix~$A_{pq}$.  Since $A_{pq} \vect e_1 = \vect e_{n_p}$, we see 
$A_{pq}(\set U_q) \not\subseteq \set U_p$.  The same argument, using $A_{qp}$,  applies when $i = q$ and $j = p$. 
Suppose then that at least one of $i, j$ is different from $p$ and $q$.  Without loss of generality, assume $j \not\in \{p, q\}$.
Then use $A_{ij}$, which has $\vect e_{n_i}$ in its first column.  So $A_{ij} (\set U_j) \not\subseteq \set U_i$.  

So, at most one $\set U_i$ is a proper, nonzero subspace; each of the
other subspaces is either the whole space $\set V_i$ or the zero subspace.  Assume $\set U_i$ is the nonzero, proper subspace; note $n_i \geq 2$.  
We claim we can 
then choose $j \neq i$  so that $A_{ij}$ has~$\vect e_{n_i}$ in the first column and $A_{ji}$ has $\vect e_{n_j}$ in the first column.  
If $i = p$, choose $j = q$, and if $i = q$, choose $j = p$.  
If $i \neq p$ and $i \neq q$, choose any $j$ which is different from $i, p$ and $q$.  (This is where we use the fact that $\vert \set I \vert  \geq 4$.)   
The subspace $\set U_j$ is either the full space $\set V_j$, or it is the zero subspace. 
If $\set U_j = \set V_j$, then $A_{ij}( \set V_j)$ contains $A_{ij} \vect e_1 = \vect e_{n_i}$, which is not  in $\set U_i $.  
So $A_{ij} (\set U_j) \not\subseteq \set U_i$.
If $\set U_j = \{0\}$, 
then, since  $\vect e_1 \in \set U_i$, we have $A_{ji} \vect e_1 = \vect e_{n_j}  \in A_{ij} (\set U_i)$.  So $A_{ji} (\set U_i) \not\subseteq \set U_j$.
Hence,  \set A is not properly reducible in the coupled sense.  
 \end{example}

 In the example above, we needed $\vert \set I \vert  \geq 4$.  What can we say when $K = 2$ or $K = 3$?  
 In these cases, the field \field F must be considered.  The reason is, that for \set A to be properly reducible in the coupled sense, at least one 
 $A_{ii}$ must have a nonzero, proper invariant subspace.  If \field F is algebraically closed and $n \geq 2$, then any $n \times n$ matrix over \field F has an eigenvalue in 
 \field F, and the line spanned by a corresponding eigenvector is a nonzero, proper invariant subspace.  But if \field F is not algebraically closed, there may be 
 $n \times n$ matrices over \field F which have no proper invariant subspaces.  We shall give an example for the real numbers later, but   
 first we show that if \field F is an algebraically closed field, then 
 $PropRed(\field F, n, 2) = Red(\field F, n, 2)$ and $PropRed(\field F, n, 3) = Red(\field F, n, 3)$ for all $n \geq 2$. 
 
We use the following lemma  to deal with the cases $K = 2$ and $K = 3$.  
 
 \begin{lemma} 
 \label{subspacelemmaextreme}
 Let $\set A = \{A_{ij} \}_{i, j \in \set I}$ where $A_{ij}$ is $n_i \times n_j$. 
 Suppose \set A is coupled reducible  with 
$\{\set U_i\}_{i \in \set I}$ satisfying one of the following.
 \begin{enumerate}
 \item $\set U_p = \set V_p$ for exactly one index value $p$, and  $\set U_i = \{0\}$ when $i \neq p$.
 \item $\set U_p = \{0 \}$ for exactly one index value $p$ and $\set U_i =\set V_i$ when $i \neq p$.
 \end{enumerate}
 Suppose $\set W_p$  is a nonzero, proper invariant subspace of $A_{pp}$. 
 Then  \set A is properly reducible by coupled similarity via the subspaces obtained by replacing $\set U_p$ by $\set W_p$, and 
 leaving the other $\set U_i$'s unchanged.  

 \end{lemma}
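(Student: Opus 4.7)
The plan is to verify directly that the modified family of subspaces $\{\set U_i'\}_{i\in \set I}$, defined by $\set U_p' = \set W_p$ and $\set U_i' = \set U_i$ for $i \neq p$, still satisfies the inclusion $A_{ij}(\set U_j') \subseteq \set U_i'$ for all $i,j \in \set I$. The proper-reducibility conclusion will then be immediate from the hypothesis that $\set W_p$ is a nonzero proper subspace of $\set V_p$.

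The crucial preliminary observation, extracted from the hypotheses themselves, is a vanishing fact for certain $A_{ij}$'s. In Case~1, for each $i \neq p$ we have $A_{ip}(\set V_p) = A_{ip}(\set U_p) \subseteq \set U_i = \{0\}$, so $A_{ip} = 0$. In Case~2, for each $j \neq p$ we have $A_{pj}(\set V_j) = A_{pj}(\set U_j) \subseteq \set U_p = \{0\}$, so $A_{pj} = 0$. These are instances of fact~3 following Definition~\ref{generalcoupledreduc}. With these zero blocks in hand, the remaining verification is just bookkeeping.

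I would then split the verification of $A_{ij}(\set U_j') \subseteq \set U_i'$ into four sub-cases according to whether $i=p$ and whether $j=p$. The diagonal case $i=j=p$ is handled by the invariance assumption $A_{pp}(\set W_p)\subseteq \set W_p$. The case $j \neq p$, $i \neq p$ is unchanged from the original reducing family, since both subspaces are unaltered. In Case~1, the case $j=p$, $i \neq p$ is handled by $A_{ip}=0$, and the case $i=p$, $j\neq p$ follows from $\set U_j' = \{0\}$. Symmetrically in Case~2, the case $j=p$, $i \neq p$ follows from $\set U_i' = \set V_i$, and the case $i=p$, $j\neq p$ uses $A_{pj}=0$. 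Thus the inclusion holds for all $i, j$.

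Finally, since $\set W_p$ is by assumption a nonzero, proper subspace of $\set V_p$, the condition in Definition~\ref{generalcoupledreduc} for proper reducibility in the coupled sense is met at index $p$. There is no real obstacle here; the main ``work'' is simply noting the forced vanishing of the off-diagonal blocks $A_{ip}$ (Case~1) or $A_{pj}$ (Case~2), which is what allows $\set U_p$ to be shrunk from $\set V_p$ (or enlarged from $\{0\}$) to the proper subspace $\set W_p$ without breaking any inclusion.
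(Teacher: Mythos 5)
Your proof is correct and follows essentially the same route as the paper's: verify the inclusions $A_{ij}(\set U_j') \subseteq \set U_i'$ case by case, with the key step being the forced vanishing of $A_{ip}$ (Case 1) or $A_{pj}$ (Case 2) extracted from the original reducing family, exactly as the paper does. Your version merely states that zero-block observation up front rather than inline, which is a cosmetic difference.
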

 
 \begin{proof}
Since $\set U_p$ is the only subspace that is changed, we continue to have $A_{ij}(\set U_j) \subseteq \set U_i$ whenever $i$ and $j$ are both different from $p$.   
Also, $\set W_p$  is chosen to satisfy  $A_{pp}(\set W_p) \subseteq \set W_p$.
It remains to consider $A_{ip}$ and $A_{pi}$  for $i \neq p$.  
 
 In case 1, we have  $\set U_i = \{0 \}$ for $i \neq p$, so $A_{pi} (\set U_i) =  \{0 \} \subseteq \set W_p$.  We also have 
 $A_{ip} (\set U_p)  \subseteq \set U_i = \{0 \}$.   Since  $\set U_p = \set V_p$, we must have 
 $A_{ip} (\set W_p) = \{0 \} = \set U_i $.
 
 In case 2, we have $\set U_i = \set V_i$ for $i \neq p$, and $\set U_p =  \{0 \}$.   So $A_{pi}(\set U_i) = \{0\} \subseteq \set W_p$.     
 We also have  $A_{ip}(\set W_p) \subseteq \set V_i = \set U_i$ for $i \neq  p$. 
 \end{proof}

 Now suppose  \field F is algebraically closed, and $n \geq 2$.   Any $n \times n$ matrix over \field F has a nonzero, proper invariant subspace.  
 For $K = 2$, Lemma~\ref{subspacelemmaextreme} immediately tells us that \set A is 
coupled reducible if and only if it is properly reducible, i.e., $PropRed(\field F, n, 2) = Red(\field F, n, 2)$. 
For the case $K= 3$, suppose \set A is reduced by $\set U_1, \set U_2, \set U_3$. If none of the $\set U_i$'s is a nonzero proper subspace, then each is either 
 \set V or $0$, so either two of them are \set V, with the third being zero, or vice versa, two of them are  zero, with the third being~\set V.
Lemma~\ref{subspacelemmaextreme} then tells us \set A is properly reducible.  Hence, for algebraically closed
 \field F and $n \geq 2$ we have 
 $PropRed(\field F, n, 3) = Red(\field F, n, 3).$

If \field F is not algebraically closed, then a matrix over \field F need not have a proper invariant subspace.  Consider the case $\field F =  \blabold R$, the field of real numbers. 
Let $A$ be a real $n \times n$ matrix, where $n \geq 2$.  
The eigenvalues of $A$ are in \blabold C, and the non-real eigenvalues occur in conjugate pairs.  If $\lambda$ is a real eigenvalue of 
$A$ then there is a corresponding real eigenvector, \vect v, and the line spanned by \vect v is a proper, nonzero invariant subspace of $A$.  
For a pair of complex conjugate, non-real eigenvalues, $\lambda, \overline\lambda$, there is  a corresponding two dimensional invariant subspace.

Consider the following example for $K \geq 2$ and 
$2 \times 2$ real matrices.

\begin{example}
\label{examplereal}
Choose an angle $\theta$ with $0 < \theta < \pi$.  For $1 \leq i \leq K$, set 
\[ 
A_{ii}  = \pmatrix{ \cos \theta & - \sin \theta \cr \sin \theta & \cos \theta}.
\]
 This is the matrix for rotation of the plane ${\blabold R}^2$ by angle $\theta$.  
Since no line through the origin is mapped to itself by this rotation, this map has no nonzero, proper invariant subspace.  Hence, for any choice of the $A_{ij}$'s when 
$i \neq j$,  the set \set A is not properly reducible in the coupled sense.  It is, however, possible to find $A_{ij}$'s such that \set A is reducible in the coupled sense.  
Choose a positive integer $s$ with $1 \leq s < K$ and set $A_{ij} = 0$ whenever $i > s$ and $j \leq s$.  Set $\set U_i = \blabold R^2$ for 
$1 \leq i \leq s$ and $\set U_i = \{0\}$ for $s+1 \leq i \leq K$.  It is easy to check that the subspaces \scalarlist {\set U}{K} reduce \set A.  For, when 
$i $ and $j$ are both less than or equal to $s$, we have $\set U_i = \set U_j = \blabold R^2$, and hence $A_{ij} (\set U_j) \subseteq \set U_i$.  If $i$ and $j$ are 
both greater than~$s$, then $\set U_i = \set U_j = \{0\}$, so $A_{ij} (\set U_j) \subseteq \set U_i$. If $i > r$ and $j \leq s$, then $A_{ij} = 0$; hence
$A_{ij} (\set U_j) = \{0 \}  \subseteq \set U_i$.  Finally, if $i \leq s$ and $j > s$, then $\set U_j = \{0\}$ so $A_{ij} (\set U_j) = \{0\} \subseteq \set U_i$.  
So \set A is reducible in the coupled sense, but not properly reducible. 
\end{example} 

So for $K \geq 2$, we have $PropRed(\blabold R, 2, K) \subset Red(\blabold R, 2, K)$.  
From Example~\ref{examplerednotproperred}, 
we already knew this for $K \geq 4$; the new information is that 
$PropRed(\blabold R, 2, 2) \subset Red(\blabold R, 2, 2)$ and 
$PropRed(\blabold R, 2, 3)  \subset Red(\blabold R, 2, 3)$.

However, for $n \geq 3$, any $n \times n$ real matrix has a nonzero proper invariant subspace. 
Lemma~\ref{subspacelemmaextreme} then gives 
$PropRed(\blabold R, n, 2) = Red(\blabold R, n, 2)$ and $PropRed(\blabold R, n, 3) = Red(\blabold R, n, 3)$
when $n \geq 3$.  

\

\noindent
 {\bf Acknowledgements.}  D. Lahat thanks Dr. Jean-Fran\c{c}ois Cardoso for
insightful discussions during her Ph.D., which set the foundations for and motivated this work.


\begin{thebibliography}{alpha}

\bibitem[1] {ALJ16} T. Adal\i, D. Lahat, and C. Jutten. 
\newblock Data fusion: Benefits of fully exploiting diversity. 
\newblock Tutorial presented at {\em EUSIPCO}, Aug. 2016.

\bibitem[2]{Art11} Michael Artin.
\newblock {\em Algebra}, 2nd edition,
\newblock Pearson Prentice Hall, New Jersey, 2011.



\bibitem[3]{BR91}  Brualdi and Ryser, 
\newblock{\em Combinatorial Matrix Theory}, 
\newblock Cambridge University Press, 1991. 

\bibitem[4] {Lahat16} D. Lahat.
\newblock A data fusion perspective on source separation. 
\newblock Talk presented at the {\em Hausdorff School: Low-rank Tensor Techniques in Numerical Analysis and Optimization}, Apr. 2016.

\bibitem[5]{LJ15}  D. Lahat and C. Jutten, 
\newblock {\em Joint blind source separation of multidimensional components:
Model and algorithm}, 
\newblock in {\em Proc. EUSIPCO}, Lisbon, Portugal, Sep. 2014, pp. 1417--1421. 

\bibitem [6]{LaJu15} D. Lahat and C. Jutten, 
\newblock  A generalization to SchurÕs lemma with an application to joint independent subspace analysis, 
\newblock Technical Report hal-01247899, GIPSA-Lab, Grenoble, France,
Dec. 2015, https://hal.archives-ouvertes.fr/hal-01247899.

\bibitem [7] {LahJut15} D. Lahat and C. Jutten. 
\newblock On the uniqueness of coupled matrix block diagonalization in the joint analysis of multiple datasets (talk). 
\newblock In {\em SIAM Conference on Applied Linear Algebra}, Atlanta, GA, USA, Oct. 2015, http://meetings.siam.org/sess/dsp\_talk.cfm?p=72077.



\bibitem[8]{LJ16} D. Lahat and C. Jutten, 
\newblock Joint independent subspace analysis using second-order statistics, 
\newblock {\em IEEE Trans. Signal Process.}, 64 (2016), pp. 4891--4904, https://doi.org/10.1109/tsp.2016.

\bibitem [9] {LJ18} D. Lahat and C. Jutten. 
\newblock Joint independent subspace analysis by coupled block decomposition: Non-identifiable cases. 
\newblock In {\em Proc. ICASSP}, Calgary, Canada, Apr. 2018.
 

\bibitem[10]{LaJu18}  D. Lahat and C. Jutten,
\newblock {\em Joint independent subspace analysis: Uniqueness and identifiability}
\newblock {\em IEEE Trans. Signal Process.}, 2018, to appear, DOI 10.1109/TSP.2018.2880714

\bibitem[11]{MKKK10} K. Murota, Y. Kanno, M. Kojima, and S. Kojima, 
\newblock  A numerical algorithm for block-diagonal decomposition of matrix *-algebras with application to semidefinite programming, 
\newblock  {\em Japan J. Indust. Appl. Math.}, 27 (2010), pp. 125--160.


\bibitem[12]{Schur} I. Schur, 
\newblock  Neue Begr\"{u}ndung der Theorie der Gruppencharaktere, 
\newblock {\em Sitzungsberichte der
K\"{o}niglich-Preussischen Akademie der Wissenschaften zu Berlin}, Berlin, Germany, 
Jan.--Jun. 1905, pp. 406--432, https://archive.org/details/sitzungsberichte1905deutsch.

\bibitem[13] {Syl84} J. J. Sylvester. 
\newblock Sur l'\' equation en matrices px=xq.
\newblock {\em Comptes Rendus}, 99:  67--71, 1884.


\end{thebibliography}
\end{document}